\numberwithin{equation}{section}
\newtheorem{lem}[equation]{Lemma}
\newtheorem{prop}[equation]{Proposition}
\newtheorem{thm}[equation]{Theorem}
\newtheorem{fc}[equation]{F\o{}lner criterion}
\theoremstyle{definition}
\newtheorem{exa}[equation]{Example}
\newtheorem{exas}[equation]{Examples}
\newtheorem{que}[equation]{Question}
\newtheorem{rem}[equation]{Remark}
\newtheorem{rems}[equation]{Remarks}
\def\N{\mathbb N}
\def\R{\mathbb R}
\def\ve{\varepsilon}
\def\vf{\varphi}
\def\la{\langle}
\def\ra{\rangle}
\newcommand{\ess}{\operatorname{ess}}
\newcommand{\dive}{\operatorname{div}}
\newcommand{\inte}{\operatorname{int}}
\newcommand{\grad}{\operatorname{grad}}
\newcommand{\Ric}{\operatorname{Ric}}
\begin{document}

%%%%%%%%%%%%%%%%%%%%%%%%%%%%%%%%%%%%%%%%%%%%%%

\title{Bottom of spectra and amenability of coverings}
\author{Werner Ballmann}
\address
{WB: Max Planck Institute for Mathematics,
Vivatsgasse 7, 53111 Bonn
and Hausdorff Center for Mathematics,
Endenicher Allee 60, 53115 Bonn}
\email{hwbllmnn\@@mpim-bonn.mpg.de}
\author{Henrik Matthiesen}
\address
{HM: Max Planck Institute for Mathematics,
Vivatsgasse 7, 53111 Bonn,
and Hausdorff Center for Mathematics,
Endenicher Allee 60, 53115 Bonn}
\email{hematt\@@mpim-bonn.mpg.de}
\author{Panagiotis Polymerakis}
\address{PP: Institut f{\"u}r Mathematik,
Humboldt-Universit\"at zu Berlin,
Unter den Linden 6, 10099 Berlin.}
\email{polymerp@hu-berlin.de}

%\dedicatory{Dedicated to Gang Tian on the occasion of his sixtieth birthday}

\thanks{\emph{Acknowledgments.}
We are grateful to the Max Planck Institute for Mathematics and the Hausdorff Center for Mathematics for their support and hospitality.}

\date{\today}

\subjclass[2010]{58J50, 35P15, 53C99}
\keywords{Bottom of spectrum, amenable covering}

\begin{abstract}
For a Riemannian covering $\pi\colon M_1\to M_0$,
the bottoms of the spectra of $M_0$ and $M_1$ coincide if the covering is amenable.
The converse implication does not always hold.
Assuming completeness and a lower bound on the Ricci curvature,
we obtain a converse under a natural condition on the spectrum of $M_0$.
\end{abstract}

\maketitle

%%%%%%%%%%%%%%%%%%%%%%%%%%%%%%%%%%%%%%%%%%%%%%
\section{Introduction}
\label{intro}
%%%%%%%%%%%%%%%%%%%%%%%%%%%%%%%%%%%%%%%%%%%%%%

We are interested in the behaviour of the bottom of the spectrum of Laplace and Schr\"odinger operators under coverings.
To set the stage, let $M$ be a simply connected and complete Riemannian manifold
and $\pi_0\colon M\to M_0$ and $\pi_1\colon M\to M_1$ be Riemannian subcovers of $M$.
Let $\Gamma_0$ and $\Gamma_1$ be the groups of covering transformations of $\pi_0$ and $\pi_1$, respectively, and assume that $\Gamma_1\subseteq\Gamma_0$.
Then the resulting Riemannian covering $\pi\colon M_1\to M_0$ satisfies $\pi\circ\pi_1=\pi_0$.
Under these circumstances, we always have
\begin{align}\label{geq}
  \lambda_0(M_1) \ge \lambda_0(M_0),
\end{align}
see e.g.\;\cite[Theorem 1.1]{BMP} (and \cref{secpre} for notions and notations).
Recall also that any local isometry between complete and connected Riemannian manifolds is a Riemannian covering and, therefore, fits into our schema.

We say that the covering $\pi$ is \emph{amenable} if the right action of $\Gamma_0$ on $\Gamma_1\backslash\Gamma_0$ is amenable.
If $\pi$ is normal, that is, if $\Gamma_1$ is a normal subgroup of $\Gamma_0$,
then this holds if and only if $\Gamma_1\backslash\Gamma_0$ is an amenable group.
If $\pi$ is amenable, then
\begin{align}\label{ame}
  \lambda_0(M_1) = \lambda_0(M_0),
\end{align}
see \cite[Theorem 1.2]{BMP}.
The problem whether, conversely, equality implies amenability of the covering is quite sophisticated,
as Theorems \ref{gefin} and \ref{coco}, \cref{exasim}, and the examples on pages 104--105 in \cite{Br2} show.
In the case where $M_0$ is compact and $\pi$ is the universal covering (that is, $\pi=\pi_0$), amenability has been established by Brooks \cite[Theorem 1]{Br1}.
(A proof avoiding geometric measure theory is contained in \cite{Po}.)
Theorem 2 of Brooks in \cite{Br2} and Th\'eor\`eme 4.3 of Roblin and Tapie in \cite{RT} include normal Riemannian coverings of non-compact manifolds, but impose spectral conditions on $M_0$ and $\pi$, which it might be difficult to verify, and restrictions on the topology of $M_0$.
At the expense of requiring a lower bound on the Ricci curvature, we eliminate topological assumptions altogether and replace the spectral assumptions in \cite{Br2} and \cite{RT} by a weaker and natural condition on the bottom  $\lambda_{\ess}(M_0)$ of the essential spectrum of $M_0$.

\begin{thm}\label{notam}
Suppose that the Ricci curvature of $M$ is bounded from below
and that $\lambda_{\ess}(M_0)>\lambda_0(M_0)$.
Then
\begin{align*}
  \lambda_0(M_1) = \lambda_0(M_0)
\end{align*}
if and only if the covering $\pi\colon M_1\to M_0$ is amenable.
\end{thm}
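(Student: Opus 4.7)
One direction of the theorem---amenability implies $\lambda_0(M_1)=\lambda_0(M_0)$---is already contained in \cite[Theorem 1.2]{BMP}, so the content is the converse. Suppose $\lambda_0(M_1)=\lambda_0(M_0)=:\lambda_0$. The hypothesis $\lambda_{\ess}(M_0)>\lambda_0$ forces $\lambda_0$ to be an isolated $L^2$-eigenvalue of the Laplacian on $M_0$, and by standard arguments (elliptic regularity and the maximum principle) there is a smooth, strictly positive ground state $\vf_0\in L^2(M_0)$ with $\Delta\vf_0=\lambda_0\vf_0$. Lifting by $\pi$ produces a positive $\lambda_0$-eigenfunction $\vf:=\vf_0\circ\pi$ on $M_1$, not in $L^2$ in general, on whose density the combinatorics of the covering will be encoded.

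The analytical engine is Agmon's ground state representation: for $v\in C_c^\infty(M_1)$ and $u=v\vf$,
\begin{align*}
  \int_{M_1}|\nabla u|^2-\lambda_0\int_{M_1}u^2=\int_{M_1}|\nabla v|^2\vf^2,
\end{align*}
which shows that the assumption $\lambda_0(M_1)=\lambda_0$ is equivalent to the statement that the weighted Rayleigh quotient $\int|\nabla v|^2\vf^2 / \int v^2\vf^2$ has infimum $0$ on $C_c^\infty(M_1)$. Choose $v_n\ge 0$ in $C_c^\infty(M_1)$ with $\int v_n^2\vf^2=1$ and $\int|\nabla v_n|^2\vf^2\to 0$. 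A standard co-area argument applied to $v_n^2$, combined with Cauchy--Schwarz, then produces super-level sets $\Omega_n\subset M_1$ whose $\vf^2$-weighted Cheeger ratio
\begin{align*}
  \frac{\int_{\partial\Omega_n}\vf^2\,d\mathcal H^{n-1}}{\int_{\Omega_n}\vf^2\,dv_{M_1}}
\end{align*}
tends to $0$.

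It remains to convert this continuous, weighted Cheeger statement on $M_1$ into the combinatorial F\o{}lner criterion for the right action of $\Gamma_0$ on $\Gamma_1\backslash\Gamma_0$. Pick a relatively compact open $U\subset M_0$ that is evenly covered by $\pi_0\colon M\to M_0$, and let $F\subset\Gamma_0$ be the finite set of those $g$ with $gU\cap U\ne\emptyset$. The connected components of $\pi^{-1}(U)\subset M_1$ are then indexed by $\Gamma_1\backslash\Gamma_0$, and interfaces between neighbouring sheets correspond to multiplication by elements of $F$. To each $\Omega_n$ one associates a finite subset $A_n\subset\Gamma_1\backslash\Gamma_0$ consisting of those indices $x$ whose sheet is ``well filled'' by $\Omega_n$ in the $\vf^2$-weighted sense; the construction is arranged so that $\sum_{f\in F}|A_n\triangle A_n f|$ is controlled by a constant multiple of the weighted boundary of $\Omega_n$, while $|A_n|$ is controlled from below by a multiple of its weighted volume. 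The Ricci lower bound enters here in two essential ways: by volume comparison it yields uniform doubling at any fixed scale, and through a Harnack inequality for positive eigenfunctions it constrains $\vf$ to vary only by a bounded factor on balls of fixed radius, so that $\vf^2$-weighted and counting comparisons over individual sheets are valid up to uniform constants. Since $U$ and $F$ can be chosen to exhaust any prescribed finite subset of $\Gamma_0$, this yields a F\o{}lner sequence for the action, hence amenability. The principal difficulty I expect lies precisely in this last, geometric-to-combinatorial step: because $\vf$ is unbounded on $M_1$ and the action need not be unimodular, one must take care that $\Omega_n$ does not concentrate its $\vf^2$-mass in thin shells where boundary effects would overwhelm the volume term, and the Ricci bound is exactly what rules such pathologies out.
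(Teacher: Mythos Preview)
Your overall architecture is right and matches the paper's: ground-state transform (the paper calls it renormalization), reduce $\lambda_0(M_1)=\lambda_0(M_0)$ to the vanishing of a $\vf^2$-weighted Cheeger constant on $M_1$, then pass from weighted isoperimetry to a F\o{}lner set for the $\Gamma_0$-action on $\Gamma_1\backslash\Gamma_0$, with the Ricci lower bound entering through Harnack and Buser-type volume control. But there is a genuine gap in how you deploy the hypothesis $\lambda_{\ess}(M_0)>\lambda_0(M_0)$.

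You invoke that hypothesis only once, to secure an $L^2$ positive ground state $\vf_0$ on $M_0$. In the paper's argument this is not the essential use (indeed, a positive $\lambda_0$-eigenfunction exists regardless, and the paper never uses square-integrability of $\vf_0$). The hypothesis is used a second time, and this is where the real work happens: it produces a compact $K\subset M_0$ with $\lambda_0(M_0\setminus K)>\lambda_0(M_0)$, hence $\lambda_0(M_1\setminus\pi^{-1}(K))>\lambda_0(M_0)$ by monotonicity under coverings. This forces the test functions (equivalently, your sets $\Omega_n$) to carry a definite fraction of their $\vf^2$-mass over the fixed compact $K_0\supset K$; otherwise, cutting them off outside $\pi^{-1}(K)$ would contradict the spectral gap there. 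Only then can one find a point $x\in K_0$ whose fibre $\pi^{-1}(x)$ meets $\Omega_n$ in a set with small boundary-to-interior ratio, and read off a non-empty F\o{}lner set. Your step~7 fixes an evenly covered $U\subset M_0$ a priori and then hopes the ``well-filled'' sheets $A_n$ over $U$ form a F\o{}lner set; but nothing in your outline prevents $\Omega_n$ from living entirely over $M_0\setminus U$, in which case $A_n=\emptyset$. The Ricci bound does \emph{not} rule this out---it handles the passage from a thin weighted boundary to a thick $r$-collar (the paper's Lemma~3.1, a Buser-type estimate), but it says nothing about where in the base the sets sit. That localization is precisely the content of the paper's Lemma~4.3, and it is where $\lambda_{\ess}(M_0)>\lambda_0(M_0)$ is genuinely consumed. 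Example~1.7 in the paper shows the theorem fails without this hypothesis, so the gap is not cosmetic.
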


\cref{notam} gives a positive answer to the speculations of Brooks on page 102 of \cite{Br2}.
Theorems \ref{gefin} and \ref{coco} and \cref{exasim} show that the assumption $\lambda_{\ess}(M_0)>\lambda_0(M_0)$ is sensible.
We do not know, however, whether the additional assumption on the Ricci curvature is necessary.

\begin{exas}\label{exafir}
1) If $M_0$ is compact, then the Ricci curvature of $M_0$ is bounded and $\lambda_{\ess}(M_0)=\infty>0=\lambda_0(M_0)$.

2) If $M_0$ is non-compact, of finite volume, and with sectional curvature $-b^2\le K_M\le -a^2$,
where $b>a>0$,
then $\lambda_0(M_0)=0$ and $\Ric_M\ge(1-m)b^2$, where $m$ denotes the dimension of $M$.
Moreover,
\begin{align}\label{exafire}
  \lambda_{\ess}(M_0) \ge a^2(m-1)^2/4,
\end{align}
and hence $\lambda_{\ess}(M_0)>\lambda_0(M_0)$.
For the convenience of the reader, we will present a short proof of \eqref{exafire} at the end of the article.
\end{exas}

A hyperbolic manifold $M$ of dimension $m$ is called \emph{geometrically finite} if the action of its covering group $\Gamma$ on the hyperbolic space $H^m$ admits a fundamental domain $F\subseteq H^m$ which is bounded by finitely many totally geodesic hyperplanes.
By the work of Lax and Phillips (\cite[p.\;281]{LP}), $\lambda_{\ess}(M)=(m-1)^2/4$ if $M$ is geometrically finite of infinite volume.

\begin{thm}\label{gefin}
Let $\pi\colon M_1\to M_0$ be a Riemannian covering of hyperbolic manifolds of dimension $m$ with corresponding covering groups $\Gamma_1\subseteq\Gamma_0$ of isometries of $H^m$. 
Assume that $M_0$ is geometrically finite of infinite volume.
Then we have:
\begin{enumerate}
\item\label{gefia}
If $\lambda_0(M_0)<(m-1)^2/4$, then $\lambda_0(M_1) = \lambda_0(M_0)$
if and only if $\pi$ is amenable.
\item\label{gefib}
If $\lambda_0(M_0)=(m-1)^2/4$, then $\lambda_0(M_1) = \lambda_0(M_0)$.
\end{enumerate}
\end{thm}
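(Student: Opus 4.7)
The plan is to derive both parts from material already on the table: \cref{notam}, the inequality \eqref{geq}, and the Lax--Phillips identification of $\lambda_{\ess}(M_0)$.

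For \eqref{gefia}, I would observe that the common simply connected Riemannian manifold sitting above $M_0$ and $M_1$ is hyperbolic $m$-space $H^m$, which is complete and has Ricci curvature $-(m-1)g$, in particular bounded from below. By the Lax--Phillips theorem quoted in the paragraph preceding the statement, the assumption that $M_0$ is geometrically finite of infinite volume yields $\lambda_{\ess}(M_0)=(m-1)^2/4$. The standing hypothesis $\lambda_0(M_0)<(m-1)^2/4$ therefore translates into the spectral gap $\lambda_{\ess}(M_0)>\lambda_0(M_0)$ required by \cref{notam}. Applying \cref{notam} to the covering $\pi\colon M_1\to M_0$ gives the claimed equivalence with no additional work.

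For \eqref{gefib}, I would sandwich $\lambda_0(M_1)$ between two instances of the monotonicity inequality \eqref{geq}. The covering $\pi\colon M_1\to M_0$ gives $\lambda_0(M_1)\ge\lambda_0(M_0)=(m-1)^2/4$. For the reverse inequality, apply \eqref{geq} to the Riemannian covering $H^m\to M_1$ (the universal covering in the setup of the introduction with $\Gamma_0=\Gamma_1$ and the subgroup trivial): this yields $\lambda_0(H^m)\ge\lambda_0(M_1)$, and $\lambda_0(H^m)=(m-1)^2/4$ is classical. The two bounds match and force $\lambda_0(M_1)=(m-1)^2/4=\lambda_0(M_0)$.

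There is no substantive obstacle: part \eqref{gefia} is a direct corollary of \cref{notam} once the Lax--Phillips computation identifies the essential spectrum, and part \eqref{gefib} is an immediate sandwich based on the fact that among complete Riemannian manifolds of constant curvature $-1$ the universal cover $H^m$ already realises the maximal possible value of $\lambda_0$, so $\lambda_0(M_1)$ has nowhere to go once it has been pushed up to $(m-1)^2/4$ by \eqref{geq}. The only thing worth double-checking while writing the proof is that \eqref{geq} is applicable to the covering $H^m\to M_1$ within the framework fixed in the introduction, which it is by taking the ambient simply connected manifold to be $H^m$ itself.
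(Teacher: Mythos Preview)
Your proposal is correct and matches the paper's approach essentially verbatim: part \eqref{gefia} is deduced from \cref{notam} together with the Lax--Phillips identification $\lambda_{\ess}(M_0)=(m-1)^2/4$, and part \eqref{gefib} is obtained by the same sandwich via \eqref{geq} (the paper packages this sandwich as \cref{equess}.\ref{equb}, whose one-line proof is exactly what you wrote).
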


The first assertion of \cref{gefin} follows immediately from \cref{notam} and the identification $\lambda_{\ess}(M_0)=(m-1)^2/4$ by Lax and Phillips quoted above,
the second is an incarnation of the general observation stated in \cref{equess}.\ref{equb} below, using that $\lambda_0(H^m)=(m-1)^2/4$.

\begin{rems}
1)
We say that a geometrically finite hyperbolic manifold $M=\Gamma\backslash H^m$ is \emph{convex cocompact} if it does not have cusps or, equivalently, if $\Gamma$ does not contain parabolic isometries.
\cref{gefin}.\ref{gefia} is due to Brooks in the convex cocompact case.
See (\cite[Theorem 3]{Br2}) and also \cite[Th\'eor\`eme 0.2]{RT}.

2)
The \emph{critical exponent} $\delta(\Gamma)$ of a discrete group $\Gamma$ of isometries of $H^m$ is the infimum of the set of $s\in\R$ such that the Poincar\'e series
\begin{align*}
  g(x,y,s) = \sum_{\gamma\in\Gamma} e^{-sd(x,\gamma(y))}
\end{align*}
converges for all $x,y\in H^m$.
Using Sullivan's \cite[Theorem 2.17]{Su},
the assumptions on $\lambda_0(M_0)$ in \cref{gefin} may be reformulated in terms of the critical exponent of $\Gamma_0$.
Namely
\begin{align*}
  \lambda_0(M_0) = \delta(\Gamma_0)(m-1-\delta(\Gamma_0)) < (m-1)^2/4 = \lambda_{\ess}(M_0)
\end{align*}
if $\delta(\Gamma_0)>(m-1)/2$ and $\lambda_0(M_0) = (m-1)^2/4$ if $\delta(\Gamma_0)\le(m-1)/2$.
\end{rems}

Let $M$ be the interior of a compact and connected manifold $N$ with non-empty boundary and $h$ be a Riemannian metric on $N$.
Let $\rho\ge0$ be a smooth non-negative function on $N$ defining $\partial N$, that is,
\begin{align}
  \partial N = \{\rho=0\} \hspace{2mm}\text{and}\hspace{2mm} \partial_\nu\rho > 0
\end{align}
along $\partial N$,
where $\nu$ denotes the inner normal of $N$ along $\partial N$ with respect to $h$.
Consider the conformally equivalent metric 
\begin{align}\label{mazzeo}
   g = \rho^{-2}h
\end{align}
on $M$.
The metric $g$ is complete since the factor $\rho^{-2}$ causes $\partial N$ to have infinite distance to any point in $M$.
Metrics of this kind were introduced by Mazzeo, who named them \emph{conformally compact}.
In \cite[Theorem 1.3]{Ma1}, he obtains that the essential spectrum of $g$ is $[a^2(m-1)^2/4,\infty)$,
where $a=\min\partial_\nu\rho>0$ and $m=\dim M$.
In particular, $\lambda_{\ess}(g)=a^2(m-1)^2/4$.

\begin{thm}\label{coco}
Let $\pi\colon M_1\to M_0$ be a Riemannian covering of manifolds of dimension $m$ with corresponding covering groups $\Gamma_1\subseteq\Gamma_0$ of isometries of their universal covering space $M$.
Assume that $M_0$ is conformally compact with $a=\min\partial_\nu\rho$ as above.
Then we have:
\begin{enumerate}
\item\label{cocoa}
If $\lambda_0(M_0)<a^2(m-1)^2/4$, then $\lambda_0(M_1) = \lambda_0(M_0)$
if and only if $\pi$ is amenable.
\item\label{cocob}
If $\lambda_0(M_0)=a^2(m-1)^2/4$, then $\lambda_0(M_1) = \lambda_0(M_0)$.
\end{enumerate}
\end{thm}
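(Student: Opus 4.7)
Mazzeo's identification $\lambda_{\ess}(M_0) = a^2(m-1)^2/4$ cited just above the theorem splits the statement into exactly the two cases $\lambda_{\ess}(M_0) > \lambda_0(M_0)$ and $\lambda_{\ess}(M_0) = \lambda_0(M_0)$; the strict reverse inequality cannot occur since $\lambda_0 \le \lambda_{\ess}$ always. Each case is meant to reduce to results already stated in the introduction.

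For part (\ref{cocoa}), I would reduce to \cref{notam}. Its spectral hypothesis $\lambda_{\ess}(M_0)>\lambda_0(M_0)$ is immediate from Mazzeo's formula combined with the assumption of (\ref{cocoa}). The remaining hypothesis is a lower Ricci bound on the universal cover $M$. Since $M\to M_0$ is a local isometry, it suffices to bound $\Ric_g$ on $M_0=\inte N$ equipped with $g=\rho^{-2}h$. I would do this by applying the standard conformal change formula to $g$ on the compact manifold $N$ and verifying that all sectional curvatures of $g$ extend continuously to $\partial N$, with limiting value $-|d\rho|_h^2=-(\partial_\nu\rho)^2$ along $\partial N$. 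Compactness of $N$ then furnishes the desired uniform Ricci bound, and \cref{notam} immediately delivers (\ref{cocoa}).

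For part (\ref{cocob}), the strategy is exactly the one used to obtain \cref{gefin}.\ref{gefib}: invoke the general observation \cref{equess}.\ref{equb}, with the identification $\lambda_{\ess}(M_0)=a^2(m-1)^2/4$ in place of the equality $\lambda_0(H^m)=(m-1)^2/4$ used there. Combined with the standing inequality \eqref{geq}, this should pin $\lambda_0(M_1)$ to $\lambda_0(M_0)$ without any amenability input.

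The main technical point I anticipate is the Ricci lower bound for a conformally compact metric. Although the calculation is essentially textbook, care is required near $\partial N$ where the conformal factor $\rho^{-2}$ diverges: the apparent blow-up produced by the conformal curvature formula is cancelled by the terms involving $|d\rho|_h^2/\rho^2$, leaving the finite limit $-|d\rho|_h^2$. Once this cancellation is confirmed, the proof reduces entirely to invocations of \cref{notam} and \cref{equess}.\ref{equb}, with no further calculation.
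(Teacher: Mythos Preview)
Your treatment of part~\eqref{cocoa} is fine and matches the paper: Mazzeo's formula gives $\lambda_{\ess}(M_0)=a^2(m-1)^2/4>\lambda_0(M_0)$, the conformal-change computation shows the sectional curvatures of $g$ extend continuously to $\partial N$ with limit $-(\partial_\nu\rho)^2$, so they are bounded on the compact $N$, and \cref{notam} applies.

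For part~\eqref{cocob}, however, there is a genuine gap. \cref{equess}.\ref{equb} requires $\lambda_0(M_0)=\lambda_0(M)$, where $M$ is the \emph{universal cover}. In the hyperbolic case this is available because $M=H^m$ and $\lambda_0(H^m)=(m-1)^2/4$ is known. In the conformally compact case you only have Mazzeo's identification of $\lambda_{\ess}(M_0)$, which is a statement about $M_0$, not about $M$; it gives no upper bound on $\lambda_0(M)$ or $\lambda_0(M_1)$. So the substitution you propose---``$\lambda_{\ess}(M_0)=a^2(m-1)^2/4$ in place of $\lambda_0(H^m)=(m-1)^2/4$''---does not feed the hypothesis of \cref{equess}.\ref{equb}, and \eqref{geq} alone only gives the lower bound $\lambda_0(M_1)\ge\lambda_0(M_0)$.

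What is missing is precisely an upper bound $\lambda_0(M_1)\le a^2(m-1)^2/4$. The paper obtains it by lifting the conformally compact structure: the covering $M_1\to M_0$ extends to a covering $N_1\to N_0$ of the compactifications, and the sectional curvature asymptotics you computed for $g_0$ lift verbatim to $g_1$ near $\partial N_1$. One then applies Cheng's eigenvalue comparison \cite[Theorem 1.1]{Ch} to $g_1$-balls of fixed radius centred at points $(x,y)$ with $y\to0$, where the curvature is pinched near $-a^2$, to get $\lambda_0(M_1)\le a^2(m-1)^2/4$. Combined with \eqref{geq} this pins $\lambda_0(M_1)=\lambda_0(M_0)$. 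Note that your own curvature computation for part~\eqref{cocoa} is exactly the ingredient needed here; you just did not invoke Cheng's theorem to convert it into the required eigenvalue bound on the cover.
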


The first assertion of \cref{coco} follows immediately from \cref{notam} together with Mazzeo's $\lambda_{\ess}(M_0)=a^2(m-1)^2/4$ quoted above,
where we note that the sectional curvature of $M_0$ is bounded from above and below.
The second assertion of \cref{coco} is proved in \cref{secfin}.

\begin{rem}
By changing the metric on a compact part of $M_0$ appropriately, it is easy to obtain examples which satisfy the first assertion of \cref{coco}.
The same remark applies to \cref{gefin}.
\end{rem}

\begin{exa}[concerning \cref{notam}]\label{exasim}
Let $P$ be a compact and connected manifold of dimension $m$ with connected boundary $\partial P=:N_0$.
Assume that the fundamental group of $N_0$ is amenable; e.g., $N_0=S^{m-1}$. 
Let $U\cong[0,\infty)\times N_0$ be a collared neighborhood of $N_0\cong\{0\}\times N_0$ in $P$.
Let $g_0$ be a Riemannian metric on $M_0=P\setminus N_0$, which is equal to $dx^2+h_0$ along $V_0=U\setminus N_0\cong(0,\infty)\times N_0$,
where we write elements of $V_0$ as pairs $(x,y)$ with $x\in(0,\infty)$ and $y\in N_0$
and where $h_0$ is a Riemannian metric on $N_0$.
Since $N_0$ is compact, we have $\lambda_0(V_0)=0$.
Since $\lambda_0(M_0)\le\lambda_0(V_0)$,
we conclude that $\lambda_0(M_0)=0$.

The volume of $g_0$ is infinite, and the sectional curvature of $g_0$ is bounded.

Let $\pi\colon M_1\to M_0$ be a Riemannian covering and $V_1$ be a connected component of $\pi^{-1}(V_0)$.
Then $\pi_1\colon V_1\to V_0$ is a Riemannian covering,
and it is amenable since the fundamental group of $V_0$ is amenable.
Therefore $\lambda_0(V_1)=\lambda_0(V_0)$, by \cite[Theorem 1.2]{BMP}.
Since $\lambda_0(M_1)\le\lambda_0(V_1)=0$,
we conclude that $\lambda_0(M_1)=0$.
It follows that $\lambda_0(M_1)=\lambda_0(M_0)=0$, regardless of whether $\pi$ is amenable or not.

The example is very much in the spirit of the surface $S_\alpha$ (for $0<\alpha<1$), discussed on page 104 of \cite{Br2}.
Note that $S_\alpha$ is complete with finite area and bounded curvature.
\end{exa}

We see in \cref{gefin}.\ref{gefib} and \cref{coco}.\ref{cocob}
that the essential spectrum can be in the way of the bottom of the spectrum to grow.
One aspect of this is revealed in the first of the following two observations.

\begin{prop}\label{equess}
In our setup of Riemannian coverings,
\begin{enumerate}
\item\label{equa}
if $\pi$ is infinite and $\lambda_0(M_1)=\lambda_0(M_0)$, then $\lambda_0(M_1)=\lambda_{\ess}(M_1)$.
\item\label{equb}
if $\lambda_0(M_0)=\lambda_0(M)$, then $\lambda_0(M_1)=\lambda_0(M_0)$.
\end{enumerate}
\end{prop}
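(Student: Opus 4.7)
The plan is to treat part (b) as immediate from (\ref{geq}) and part (a) via Persson's characterization of the essential spectrum.

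For (b), I would apply (\ref{geq}) to the coverings $M\to M_1$ and $M_1\to M_0$, obtaining the chain $\lambda_0(M_0)\le\lambda_0(M_1)\le\lambda_0(M)$; the hypothesis $\lambda_0(M_0)=\lambda_0(M)$ collapses this to equalities, giving $\lambda_0(M_1)=\lambda_0(M_0)$.

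For (a), set $\lambda:=\lambda_0(M_0)=\lambda_0(M_1)$. Since $\lambda_{\ess}(M_1)\ge\lambda_0(M_1)=\lambda$ is automatic, the task is the reverse inequality. By Persson's formula
$$\lambda_{\ess}(M_1)=\sup\{\lambda_0(M_1\setminus K):K\subset M_1\text{ compact}\},$$
it suffices to produce, for each compact $K\subset M_1$ and each $\varepsilon>0$, a function $F\in C_c^\infty(M_1)$ with $\supp F\cap K=\emptyset$ and $\int|\nabla F|^2\le(\lambda+\varepsilon)\int F^2$. I would start from $f\in C_c^\infty(M_0)$ with $\int|\nabla f|^2\le(\lambda+\varepsilon/2)\int f^2$ and transplant it to a tile of $M_1$ that avoids $K$, using that the index $[\Gamma_0:\Gamma_1]$ is infinite. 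The cleanest route is through the universal cover: lift $f$ to the $\Gamma_0$-invariant function $\tilde f:=f\circ\pi_0$ on $M$, pick $\chi\in C_c^\infty(M)$ satisfying $\sum_{\gamma\in\Gamma_0}(\chi\circ\gamma^{-1})^2\equiv 1$, and, for each coset representative $\delta$ of $\Gamma_1$ in $\Gamma_0$, push $(\chi\tilde f)\circ\delta^{-1}$ down to $M_1$ by $\Gamma_1$-averaging to obtain $F_\delta\in C_c^\infty(M_1)$ with support in the tile $\pi_1(\delta\cdot\supp\chi)$.

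The pointwise identity
$$\sum_{\gamma\in\Gamma_0}|\nabla((\chi\circ\gamma^{-1})\tilde f)|^2=|\nabla\tilde f|^2+\tilde f^2\sum_{\gamma\in\Gamma_0}|\nabla(\chi\circ\gamma^{-1})|^2,$$
which follows directly from $\sum_\gamma(\chi\circ\gamma^{-1})^2=1$, together with the descent of integrals from $M$ to $M_0$ via the $\Gamma_0$-invariance of $\tilde f$, shows that the Rayleigh quotient of each $F_\delta$ on $M_1$ exceeds that of $f$ on $M_0$ by at most $\|\sum_\gamma|\nabla(\chi\circ\gamma^{-1})|^2\|_{L^\infty(\supp f)}$; this error is kept below $\varepsilon/2$ by taking $\chi$ to be a sufficiently smooth partition bump on a thick enlargement of a $\Gamma_0$-fundamental domain. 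Because $[\Gamma_0:\Gamma_1]=\infty$, the tiles cover an infinite region of $M_1$, so all but finitely many $F_\delta$ have support disjoint from $K$, and any such one serves as $F$.

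The main obstacle is balancing two competing demands on $\chi$: small gradient, for a sharp Rayleigh-quotient bound, versus localized support, so that the tiles $\pi_1(\delta\cdot\supp\chi)$ for distinct coset representatives $\delta$ remain essentially disjoint in $M_1$. Since $\supp f$ is compact, the $\Gamma_0$-action on a neighborhood of $\pi_0^{-1}(\supp f)$ is cocompact, so only finitely many $\Gamma_0$-translates of $\supp\chi$ contribute to the relevant sums on $\supp f$, keeping the localization error manageable.
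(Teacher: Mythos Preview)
Your proof of (b) is correct and matches the paper's argument exactly.

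Your proof of (a), however, has a genuine gap. Notice first that your construction never invokes the hypothesis $\lambda_0(M_1)=\lambda_0(M_0)$; if it worked, it would prove $\lambda_{\ess}(M_1)\le\lambda_0(M_0)$ for \emph{every} infinite Riemannian covering $M_1\to M_0$. That is false: take $M_0$ a closed hyperbolic surface and $M_1=M=H^2$ its universal cover, so $\lambda_0(M_0)=0$ while $\lambda_{\ess}(H^2)=\lambda_0(H^2)=1/4$. The failure is precisely in the step where you assert that the localization error $E:=\sum_{\gamma\in\Gamma_0}|\nabla(\chi\circ\gamma^{-1})|^2$ can be made small by thickening $\chi$. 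In fact your own identity, integrated over a $\Gamma_0$-fundamental domain, gives $\int_M|\nabla(\chi\tilde f)|^2=\int_{M_0}|\nabla f|^2+\int_{M_0}f^2\,E$; taking $f\equiv 1$ on the compact $M_0$ above yields $R(\chi)=|M_0|^{-1}\int_{D_0}E\ge\lambda_0(H^2)=1/4$ for \emph{every} admissible $\chi$, so $E$ is bounded away from zero. Thickening $\chi$ trades smaller $|\nabla\chi|$ for more overlapping translates, and the two effects cancel rather than help.

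The paper's argument for (a) is entirely different and crucially uses the hypothesis. If $\lambda_{\ess}(M_1)>\lambda_0(M_1)=:\lambda$, then $\lambda$ is an isolated eigenvalue with a positive $L^2$-eigenfunction $\varphi$ on $M_1$. On the other hand, lifting a positive $\lambda$-eigenfunction from $M_0$ (which exists since $\lambda=\lambda_0(M_0)$) gives a positive $\lambda$-eigenfunction $\psi$ on $M_1$ that is \emph{not} in $L^2$ because $\pi$ is infinite. Sullivan's theorem (\cite[Theorems 2.7 and 2.8]{Su}) says the positive $\lambda_0$-eigenfunctions on $M_1$ span a one-dimensional space, so $\psi$ would be a scalar multiple of $\varphi$, a contradiction. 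If you want to avoid citing Sullivan, you would need a different mechanism that genuinely consumes the equality $\lambda_0(M_1)=\lambda_0(M_0)$; the transplantation idea alone cannot do it.
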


The case in \cref{equess}.\ref{equa}, where the deck transformation group of $\pi$ is infinite, is also a consequence of \cite[Corollary 1.3]{Po}.
The proof of \cref{equess}.\ref{equb} is trivial:
By applying \eqref{geq} to $\pi$ and $\pi_1$,
we see that $\lambda_0(M_1)$ is pinched between $\lambda_0(M_0)$ and $\lambda_0(M)$.

The lower bound on the Ricci curvature, required in \cref{notam}, is used in two instances.
First, we need that positive eigenfunctions of the Laplacian satisfy a Harnack inequality.
To that end, we employ the Harnack inequality of Cheng and Yau (see \eqref{cy}).
Second, in the proof of \cref{modbus}, we use Buser's \cref{bulem} below.
Both, the Harnack inequality of Cheng and Yau and Buser's lemma,
require a lower bound on the Ricci curvature.
However, as we already mentioned further up,
we do not know whether \cref{notam} would hold without assuming it.

\begin{que}
Are there non-amenable Riemannian coverings $\pi\colon M_1\to M_0$ of complete and connected Riemannian manifolds $M_0$ and $M_1$, such that $\lambda_{\ess}(M_0)>\lambda_0(M_0)$ and $\lambda_0(M_1)=\lambda_0(M_0)$.
\end{que}

{\bf Structure of the article.}
In \cref{secpre}, we collect some preliminaries about Schr\"odinger operators and the geometry of Riemannian manifolds.
The volume estimate in \cref{secbus} is the basis of our discussion of the amenability of coverings.
Much of the argumentation in this section follows Buser's \cite[Section 4]{Bu}.
In \cref{secfin}, we prove a generalized version of \cref{notam} for Schr\"odinger operators,
where the potential $V$ and its derivative $dV$ are assumed to be bounded.
Furthermore, \cref{secfin} contains the outstanding proofs of \eqref{exafire}, \cref{coco}.\ref{cocob}, and \cref{equess}.\ref{equa}.

%%%%%%%%%%%%%%%%%%%%%%%%%%%%%%%%%%%%%%%%%%%%%%
\section{Preliminaries}
\label{secpre}
%%%%%%%%%%%%%%%%%%%%%%%%%%%%%%%%%%%%%%%%%%%%%%

Let $M$ be a Riemannian manifold of dimension $m$
and $V\colon M\to\R$ be a smooth potential.
We denote by $\Delta$ the Laplace operator of $M$
and by $S=\Delta+V$ the Schr\"odinger operator associated to $V$.
We say that a smooth function $\vf$ on $M$ (not necessarily square integrable)
is a \emph{$\lambda$-eigenfunction} if it solves $S\vf=\lambda\vf$.

For a point $x\in M$, subset $A\subseteq M$, and radius $r>0$,
we denote by $B(p,r)$ the open geodesic ball of radius $r$ around $x$ and by
\begin{align}
  A^r = \{p\in M\mid d(p,A)<r\}
\end{align}
the open neighborhood of radius $r$ around $A$, respectively.

For a Lipschitz function $f$ on $M$ with compact support, we call
\begin{align}\label{raylei}
  R(f) = \frac{\int_M\|\grad f\|^2+Vf^2}{\int_Mf^2}
\end{align}
the \emph{Rayleigh quotient} of $f$ and
\begin{align}\label{bottom}
  \lambda_0(M,V) = \inf R(f)
\end{align}
the \emph{bottom of the spectrum of $(M,V)$}.
Here the infimum is taken over all non-vanishing Lipschitz functions on $M$ with compact support.
In the case of the Laplacian, that is, $V=0$, we write $\lambda_0(M)$ instead of $\lambda_0(M,0)$ and call $\lambda_0(M)$ the \emph{bottom of the spectrum of $M$}.
If $M$ is complete and $V$ is bounded from below,
then $\lambda_0(M,V)$ is the minimum of the spectrum of $S$, more precisely,
of the closure of $S$ on $C^\infty_c(M)$ in $L^2(M)$.
We call
\begin{equation}\label{bottome}
  \lambda_{\ess}(M,V) = \sup_K\lambda_0(M\setminus K,V),
\end{equation}
where the supremum is taken over all compact subsets $K$ of $M$,
the \emph{bottom of the essential spectrum of $(M,V)$}.
In the case of the Laplacian, that is, $V=0$, we write $\lambda_{\ess}(M)$ instead of $\lambda_{\ess}(M,0)$ and call $\lambda_{\ess}(M)$ the \emph{bottom of the essential spectrum of $M$}.
If $M$ is complete and $V$ is bounded from below,
then $\lambda_{\ess}(M,V)$ is the minimum of the essential spectrum of $S$.

For a Borel subset $A\subseteq M$, we denote by $|A|$ the volume of $A$.
Similarly, for a submanifold $N$ of $M$ of dimension $n<m$, we let $|N|$ be the $n$-dimensional Riemannian volume of $N$.
We call
\begin{align}\label{checon}
  h(M) = \inf\frac{|\partial A|}{|A|}
  \hspace{3mm}\text{and}\hspace{3mm}
  h_{\ess}(M) = \sup_K h(M\setminus K)
\end{align}
the \emph{Cheeger constant} and \emph{asymptotic Cheeger constant of $M$}, respectively.
Here the infimum is taken over all compact domains $A\subseteq M$ with smooth boundary $\partial A$
and the supremum over all compact subsets $K$ of $M$.
The respective \emph{Cheeger inequality} asserts that
\begin{align}\label{chein}
  \lambda_0(M) \ge \frac14 h^2(M)
  \hspace{3mm}\text{and}\hspace{3mm}
  \lambda_{\ess}(M) \ge \frac14 h_{\ess}^2(M).
\end{align}
The \emph{Buser inequality} is a converse to Cheeger's inequality.
In the case where $M$ is non-compact, complete, and connected with $\Ric_M\ge(1-m)b^2$, where $b\ge0$,
it asserts that 
\begin{align}\label{ichein}
  \lambda_0(M) \le C_{1,m} b h(M).
\end{align}
See \cite[Theorem 7.1]{Bu}.
Here and below, indices attached to constants indicate the dependence of the constants on parameters.
Thus $C_{1,m}$ indicates that the constant depends on $m$ and that a constant $C_{2,m}$ is to be expected.

For a bounded domain $D\subseteq M$ with smooth boundary, we call
\begin{align}\label{neuche}
  h^N(D) = \inf_A\frac{|\partial A\cap\inte D|}{|A|}
\end{align}
the \emph{Cheeger constant of $D$ with respect to the Neumann boundary condition}.
Here $\inte D$ denotes the interior of $D$, and the infimum is taken over all domains $A\subseteq D$ with smooth intersection $\partial A\cap\inte D$ such that $|A|\le|D|/2$.

%%%%%%%%%%%%%%%%%%%%%%%%%%%%%%%%%%%%%%%%%%%%%%
\subsection{Renormalizing the Schr\"odinger operator}
\label{suseren}
%%%%%%%%%%%%%%%%%%%%%%%%%%%%%%%%%%%%%%%%%%%%%%
The idea of renormalizing the Laplacian occurs in \cite[Section 8]{Su} and \cite[Section 2]{Br2}.
The idea also works for Schr\"odinger operators, as explained in \cite[Section 7]{Po}.
More details about what we discuss here can be found in the latter article.

Let $M$ be a Riemannian manifold and $V\colon\R\to M$ be a smooth potential.
Let $\vf$ be a positive $\lambda$-eigenfunction of $S=\Delta+V$ on $M$.
For a Borel subset $A\subseteq M$, we denote by $|A|_\vf$ the $\vf$-volume of $A$,
\begin{align}\label{phivol}
  |A|_\vf = \int_A \vf^2.
\end{align}
Similarly, for a submanifold $N$ of $M$ of dimension $n<m$, we let $|N|_\vf$ be the $n$-dimensional $\vf$-volume of $N$.

We renormalize the Schr\"odinger operator $S=\Delta+V$ of $M$ and consider
\begin{align}
  S_\vf = m_{1/\vf}(S-\lambda)m_\vf 
\end{align}
instead, where $m_\vf$ and $m_{1/\vf}$ denote multiplication by $\vf$ and $1/\vf$ respectively.
Now $S$ with domain $C^\infty_c(M)$ is formally and essentially self-adjoint in $L^2(M,dx)$,
where $dx$ denotes the Riemannian volume element of $M$,
and $S_\vf$ is obtained from $S-\lambda$ by conjugation with $m_{1/\vf}$.
Hence $S_\vf$ with domain $C^\infty_c(M)$ is formally and essentially self-adjoint in $L^2(M,\vf^2dx)$.
By \cite[Proposition 7.1]{Po}, we have
\begin{align}\label{bottom2}
  \lambda_0(M,V) - \lambda = \inf\frac{\int_M\|\grad f\|^2\vf^2}{\int_Mf^2\vf^2},
\end{align}
where the infimum is taken over all non-vanishing smooth functions on $M$ with compact support.
By approximation, it follows easily that we obtain the same infimum by considering non-vanishing Lipschitz functions on $M$ with compact support.

For a bounded domain $A\subseteq M$ with smooth boundary $\partial A$, we set
\begin{align}\label{isop}
  h_\vf(M,A) = \frac{|\partial A|_\vf}{|A|_\vf}.
\end{align}
and call
\begin{align}\label{modche}
  h_\vf(M) = \inf_A h_\vf(M,A),
  \hspace{3mm}\text{and}\hspace{3mm}
  h_{\vf,\ess}(M) = \sup_K h_\vf(M\setminus K)
\end{align}
the \emph{modified Cheeger constant} and \emph{modified asymptotic Cheeger constant of $M$}, respectively.
Here the infimum is taken over all compact domains $A\subseteq M$ with smooth boundary $\partial A$
and the supremum over all compact subsets of $M$.
The Cheeger constants in \eqref{checon} correspond to the case $\vf=1$.
By \cite[Corollaries 7.2 and 7.3]{Po}, we have the \emph{modified Cheeger inequalities}
\begin{align}\label{modche2}
  \lambda_0(M,V) - \lambda \ge h_\vf(M)^2/4
  \hspace{3mm}\text{and}\hspace{3mm}
  \lambda_{\ess}(M,V) - \lambda \ge h_{\vf,\ess}(M)^2/4.
\end{align}
In particular, if $\lambda=\lambda_0(M,V)$, then $h_\vf(M)=0$.

%%%%%%%%%%%%%%%%%%%%%%%%%%%%%%%%%%%%%%%%%%%%%%
\subsection{Volume comparison}
\label{susevol}
%%%%%%%%%%%%%%%%%%%%%%%%%%%%%%%%%%%%%%%%%%%%%%
Let $H^m$ be the hyperbolic space of dimension $m$ and sectional curvature $-1$,
and denote by $\beta_m(r)$ the volume of geodesic balls of radius $r$ in $H^m$.

\begin{thm}[Bishop-Gromov inequality]\label{grobisi}
Let $M$ be a complete Riemannian manifold of dimension $m$ and $\Ric_M\ge1-m$,
and let $x$ be a point in $M$.
Then
\begin{align*}
  \frac{|B(x,R)|}{|B(x,r)|} \le \frac{\beta_m(R)}{\beta_m(r)}
\end{align*}
for all $0<r<R$.
In particular, $|B(x,r)|\le \beta_m(r)$ for all $r>0$.
\end{thm}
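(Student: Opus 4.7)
The plan is to prove Bishop--Gromov by the classical Jacobian comparison in geodesic polar coordinates around $x$, together with an extension-by-zero trick to deal with the cut locus.

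First I would fix a unit tangent vector $\theta\in T_xM$ and parametrize points by $(t,\theta)\mapsto\exp_x(t\theta)$, writing the Riemannian volume element as $J(t,\theta)\,dt\,d\theta$ on the open region where $\exp_x$ is a diffeomorphism. Here $d\theta$ is the standard measure on the unit sphere $S_x\subseteq T_xM$ and $J(t,\theta)$ is the Jacobian determinant of $\exp_x$ (equivalently, the determinant of a Jacobi-field matrix solving $Y''+R(\,\cdot\,,\dot\gamma)\dot\gamma=0$ with $Y(0)=0$, $Y'(0)=\id$). Denote by $c(\theta)$ the distance from $x$ to the cut locus along the geodesic $t\mapsto\exp_x(t\theta)$, and set $\tilde J(t,\theta)=J(t,\theta)$ for $t<c(\theta)$ and $\tilde J(t,\theta)=0$ for $t\ge c(\theta)$. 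Then
\begin{align*}
  |B(x,r)| = \int_{S_x}\int_0^{r} \tilde J(t,\theta)\,dt\,d\theta,
\end{align*}
and the analogous formula in the hyperbolic model space gives $\beta_m(r)=\omega_{m-1}\int_0^r(\sinh t)^{m-1}dt$.

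The main step is the pointwise comparison: for each $\theta$, the function $t\mapsto \tilde J(t,\theta)/(\sinh t)^{m-1}$ is non-increasing on $(0,\infty)$. To see this I would let $\sigma(t)=(\log J)'(t,\theta)$ on $(0,c(\theta))$; $\sigma$ equals the mean curvature of the distance sphere in direction $\dot\gamma$, and the Riccati equation for the shape operator together with the Cauchy--Schwarz inequality $\operatorname{tr}(A^2)\ge(\operatorname{tr} A)^2/(m-1)$ applied to the traceless part yields
\begin{align*}
  \sigma' + \frac{\sigma^2}{m-1} + \Ric(\dot\gamma,\dot\gamma) \le 0.
\end{align*}
Using $\Ric\ge1-m$ and comparing with the hyperbolic analogue $\sigma_0(t)=(m-1)\coth t$, which satisfies equality, a standard Sturm-type argument shows $\sigma(t)\le\sigma_0(t)$ for $0<t<c(\theta)$, i.e.\ $(\log(J/(\sinh t)^{m-1}))'\le 0$. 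Hence $J(t,\theta)/(\sinh t)^{m-1}$ is non-increasing on $(0,c(\theta))$; and the extension by zero at the cut locus preserves monotonicity since the ratio drops to $0$ there and stays $0$.

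Finally I would pass from pointwise to integrated monotonicity. For fixed $\theta$, the elementary fact that if $f\ge0$ and $g>0$ with $f/g$ non-increasing then $\bigl(\int_0^r f\bigr)/\bigl(\int_0^r g\bigr)$ is non-increasing in $r$ gives the ratio monotonicity for each radial slice, and integrating in $\theta$ preserves this (because for $0<r<R$ the ratio $|B(x,R)|/|B(x,r)|$ can be bounded slice-by-slice against $\beta_m(R)/\beta_m(r)$). Taking $r\to 0$ and using that $\tilde J(t,\theta)/(\sinh t)^{m-1}\to 1$ as $t\to 0^+$ yields $|B(x,r)|\le\beta_m(r)$. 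The main obstacle is the honest handling of the cut locus in the pointwise monotonicity — i.e.\ checking that the truncation $\tilde J$ really does preserve the monotonicity across the (possibly complicated) set $\{t=c(\theta)\}$ — but the extension-by-zero device reduces this to a clean statement about non-increasing functions on $(0,\infty)$.
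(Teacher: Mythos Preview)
Your argument is correct and is the standard proof of the Bishop--Gromov inequality via the Riccati comparison for the mean curvature of geodesic spheres. The paper, however, does not supply a proof of this statement at all: it is quoted as a classical result and used as a tool in \cref{susevol} and the proof of \cref{modbus}. So there is no ``paper's own proof'' to compare against; you have simply filled in the well-known argument.

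One minor remark on presentation: in the passage from the per-ray monotonicity of $\int_0^r\tilde J(t,\theta)\,dt\big/\int_0^r(\sinh t)^{m-1}\,dt$ to the global ratio $|B(x,R)|/|B(x,r)|\le\beta_m(R)/\beta_m(r)$, it is cleaner to rewrite the per-ray inequality as $\int_0^R\tilde J\,dt\le(\beta_m(R)/\beta_m(r))\int_0^r\tilde J\,dt$ and then integrate over $\theta\in S_x$, rather than to speak of ``bounding slice-by-slice'' --- this makes explicit why averaging over $\theta$ preserves the inequality. Otherwise the sketch is complete.
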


We say that a subset $D\subseteq M$ is \emph{star-shaped with respect to $x\in D$} if, for any $z\in D$ and minimal geodesic $\gamma\colon[0,1]\to M$ from $x$ to $z$, we have $\gamma(t)\in D$ for all $0\le t\le 1$.
Observing that Buser's proof of Lemma 5.1 in \cite{Bu} does not use the compactness of the ambient manifold $M$,
but only the lower bound for its Ricci curvature, his arguments yield the following estimate.

\begin{lem}[Buser]\label{bulem}
Let $M$ be a complete Riemannian manifold of dimension $m$ and $\Ric_M\ge1-m$.
Let $D\subseteq M$ be a domain which is star-shaped with respect to $x\in D$.
Suppose that $B(x,r)\subseteq D\subseteq B(x,2r)$ for some $r>0$.
Then
\begin{align*}
  h^{N}(D) \ge C_{m,r} = \frac1r C_{2,m}^{1+r},
\end{align*} 
where $0<C_{2,m}<1$.
\end{lem}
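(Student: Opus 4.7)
The plan is to adapt Buser's proof of Lemma 5.1 in \cite{Bu} to the present setup, where the ambient manifold $M$ is no longer compact. The compactness hypothesis in \cite{Bu} enters only to guarantee that the radial geodesics from $x$ remain well-defined within a bounded region; in our setting this is automatic, since $M$ is complete (so that $\exp_x$ is defined on all of $T_xM$) and $D\subseteq B(x,2r)$ is bounded, while Bishop-Gromov forces the cut locus of $x$ inside $D$ to have measure zero. Thus I would carry out Buser's argument verbatim, using only the Ricci lower bound $\Ric_M\ge 1-m$ and the star-shaped structure of $D$.

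The core of the argument is a radial decomposition in normal polar coordinates around $x$. By the star-shaped hypothesis, $D$ is described by a function $\rho\colon U_xM\to[r,2r]$ via $D = \{\exp_x(t\xi): \xi\in U_xM,\ 0\le t<\rho(\xi)\}$, and the Riemannian volume element decomposes as $dV = J(t,\xi)\,dt\,d\xi$, where $J(t,\xi)$ is the Jacobi determinant along the ray in direction $\xi$. Given a smooth subdomain $A\subseteq D$ with $|A|\le|D|/2$ and boundary $\partial A\cap\inte D$, I would relate $|A|$, $|D\setminus A|$, and $|\partial A\cap\inte D|$ by Fubini along the rays together with the coarea formula applied to the radial distance function. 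The quantitative ingredient is the Bishop-Gromov comparison (\cref{grobisi}), which bounds $J(t,\xi)$ from above by the hyperbolic comparison $\sinh^{m-1}(t)$. Combined with the containment $B(x,r)\subseteq D$, which forces each ray of length $\ge r$ inside $D$ to contribute a definite amount of volume to whichever of $A$ or $D\setminus A$ is larger along it, this uniform Jacobian control on $[0,2r]$ reduces the $m$-dimensional isoperimetric problem on $D$ to an effective one-dimensional Cheeger problem on intervals of length at most $2r$. The resulting lower bound is of order $1/r$, degraded by an exponential factor $C_{2,m}^{1+r}$ coming from the maximal growth of $\sinh^{m-1}$ on $[0,2r]$, which yields precisely the estimate in the statement.

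The main obstacle is bookkeeping rather than conceptual: one has to handle carefully the cut locus of $x$ inside $D$ (on which the polar chart is singular but which has measure zero), and one has to control directions $\xi$ for which the ray crosses $\partial A\cap\inte D$ several times before leaving $D$. Both issues are dealt with exactly as in \cite[Lemma 5.1]{Bu}. The only genuinely new verification is that none of Buser's intermediate steps secretly invokes compactness of the ambient manifold, which amounts to checking that each of them uses only the behaviour of $J(t,\xi)$ on $[0,2r]$; this follows from $\Ric_M\ge 1-m$ via Bishop-Gromov alone.
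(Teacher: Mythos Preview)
Your proposal is correct and matches the paper's approach exactly: the paper does not give an independent proof but simply observes (in the sentence preceding the lemma) that Buser's proof of \cite[Lemma~5.1]{Bu} uses only the lower Ricci bound and not the compactness of $M$, so his argument carries over verbatim. Your sketch is a faithful elaboration of that same observation, spelling out the radial decomposition and the role of Bishop--Gromov that the paper leaves implicit.
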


%%%%%%%%%%%%%%%%%%%%%%%%%%%%%%%%%%%%%%%%%%%%%%
\subsection{Separated sets}
\label{susesep}
%%%%%%%%%%%%%%%%%%%%%%%%%%%%%%%%%%%%%%%%%%%%%%
Given $r>0$,
we say that a subset $X\subseteq M$ is \emph{$r$-separated} if $d(x,y)\ge r$ for all points $x\ne y$ in $X$.
An $r$-separated subset $X\subseteq M$ is said to be \emph{complete} if $\cup_{x\in X}B(x,r)=M$. 
Any $r$-separated subset $X\subseteq M$ is contained in a complete one.

We assume now again that $M$ is complete of dimension $m$ with  $\Ric_M\ge1-m$.
For $r>0$ given, we let $X\subseteq M$ be a complete $2r$-separated subset.
For $x\in X$, we call
\begin{align}\label{dirdom}
  D_x = \{ z\in M \mid \text{$d(z,x) \le d(z,y)$ for all $y\in X$}\}
\end{align}
the \emph{Dirichlet domain} about $x$.
Since $X$ is complete as a $2r$-separated subset of $M$,
\begin{align}\label{dirdom2}
  B(x,r) \subseteq D_x \subseteq B(x,2r)
\end{align}
for all $x\in X$.
We therefore get from \cref{grobisi} that
\begin{align}\label{grobis}
  |D_x| \le |B(x,2r)| \le \frac{\beta_m(2r)}{\beta_m(r)}|B(x,r)|.
\end{align}
Furthermore, for any $x\in X$, $z\in D_x$, and minimal geodesic $\gamma\colon[0,1]\to M$ from $x$ to $z$,
we have the strict inequality $d(\gamma(t),x)<d(\gamma(t),y)$ for all $0\le t<1$ and $y\in X$ different from $x$.
In particular, $D_x$ is star-shaped.
Using \cref{bulem}, we conclude that 
\begin{align}\label{hndx}
  h^N(D_x)\ge C_{m,r}\hspace{2mm} \text{for all $x\in X$.}
\end{align}

%%%%%%%%%%%%%%%%%%%%%%%%%%%%%%%%%%%%%%%%%%%%%%
\subsection{Distance functions}
\label{susedis}
%%%%%%%%%%%%%%%%%%%%%%%%%%%%%%%%%%%%%%%%%%%%%%
Suppose that $M$ is complete and connected.
Let $K\subseteq M$ be a closed subset and $r>0$.
Define a function $f=f_{K,r}$ on $M$ by
\begin{align*}
  f(x) =
  \begin{cases}
  d(x,K) \vspace{1mm}&\text{if $d(x,K)\le r$,} \\
  r & \text{if $d(x,K)\ge r$.}
  \end{cases}
\end{align*} 
Then $f$ is a Lipschitz function with Lipschitz constant $1$.
A theorem of Rademacher says that the set $\mathcal R$ of points $x\in M$,
such that $f$ is differentiable at $x$, has full measure in $M$.
Clearly, $\|\grad f(x)\|\le1$ for all $x\in\mathcal R$. 

\begin{lem}\label{regdis}
If $x$ is a point in $\mathcal R$ such that $\grad f(x)\ne0$, then $x$ belongs to $K^r\setminus K$, $\grad f(x)$ has norm one, and there is a unique minimizing geodesic from $x$ to $K$.
Moreover, $\partial K^r$ is disjoint from $\mathcal R$.
\end{lem}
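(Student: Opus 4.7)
The plan is to first locate any $x \in \mathcal R$ with $\grad f(x) \neq 0$ inside the shell $K^r \setminus K$, and then to read off the norm and uniqueness statements by testing $df_x$ against the initial velocity of a minimising geodesic from $x$ to $K$.

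I would begin by ruling out the two trivial regions. On $K$, $f$ vanishes and hence attains its global minimum, so $\grad f = 0$ on $K \cap \mathcal R$. On the open set $\{d(\cdot, K) > r\}$, $f$ is locally constant equal to $r$, so $\grad f = 0$ there as well. It remains to eliminate $\partial K^r$, which will prove the final sentence of the lemma and simultaneously force $x$ into $K^r \setminus K$.

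For the $\partial K^r$ step, take $x$ with $d(x, K) = r$ and use completeness of $M$ together with closedness of $K$ (Hopf--Rinow) to produce a foot $y_0 \in K$ and a unit-speed minimising geodesic $\gamma \colon [0, r] \to M$ from $x$ to $y_0$. The triangle inequality combined with the $1$-Lipschitz property of $d(\cdot, K)$ sandwiches $d(\gamma(t), K) = r - t$, hence $f \circ \gamma(t) = r - t$ on $[0, r]$. If $f$ were differentiable at $x$, this would give $df_x(\dot\gamma(0)) = -1$; but $f \leq r = f(x)$ everywhere, so $x$ is a global maximum and $df_x$ must vanish, a contradiction. Hence $x \notin \mathcal R$.

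Once $x$ is known to lie in $K^r \setminus K$, $f$ agrees with $d(\cdot, K)$ in a neighbourhood of $x$. The same construction of a minimising unit-speed geodesic to $K$ produces $df_x(\dot\gamma(0)) = -1$, and combining this with the $1$-Lipschitz bound $\|\grad f(x)\| \leq 1$ via the equality case of Cauchy--Schwarz forces $\grad f(x) = -\dot\gamma(0)$ and $\|\grad f(x)\| = 1$; uniqueness of the minimising geodesic then follows because its initial velocity is $-\grad f(x)$, a quantity already determined by $x$. The only non-routine point in the argument is the exclusion of $\partial K^r$: the function has a one-sided derivative equal to $-1$ along $\dot\gamma(0)$ while sitting at a global maximum, and this clash is exactly what keeps such points out of $\mathcal R$.
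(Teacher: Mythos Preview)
Your argument is correct and uses the same two ingredients as the paper: the minimising geodesic to $K$ yields a directional derivative $-1$, and Cauchy--Schwarz then pins down $\grad f(x)=-\dot\gamma(0)$. The only organisational difference is that the paper first shows (via a curve in the gradient direction) that $\grad f(x)\ne0$ forces $x\in K^r\setminus K$, and then disposes of $\partial K^r$ by invoking that conclusion, whereas you handle $\partial K^r$ directly by the global-maximum contradiction; both routes are equivalent and equally short.
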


\begin{proof}
Let $c$ be a smooth curve through $x$ such that $c'(0)=\grad f(x)$.
Then $(f\circ c)(t)<f(x)$ for all $t<0$ sufficiently close to $0$ and $(f\circ c)(t)>f(x)$ for all $t>0$ sufficiently close to $0$.
Hence $x\notin K$ since $f\ge0$ and $x\notin M\setminus K^r$ since $f\le r$.
Therefore $x\in K^r\setminus K$, that is, $0<f(x)=d(x,K)<r$.
Let $\gamma\colon[0,f(x)]\to M$ be a minimizing unit speed geodesic from $x$ to $K$.
Then $(f\circ\gamma)(t)=f(x)-t$ for all $0\le t\le f(x)$, hence
\begin{align*}
  \la\grad f(x),\gamma'(0)\ra = (f\circ\gamma)'(0) = -1. 
\end{align*}
Since $\|\grad f(x)\|\le1$ and $\|\gamma'(0)\|=1$, we get that $\grad f(x)=-\gamma'(0)$ and hence that $\gamma$ is unique
and that $\|\grad f(x)\|=1$.

For $x\in\partial K^r\cap\mathcal R$ and $\gamma\colon[0,f(x)]\to M$ a minimizing unit speed geodesic from $x$ to $K$,
we would have $-1=(f\circ\gamma)'(0)=\la\grad f(x),\gamma'(0)\ra$, hence that $\grad f(x)\ne0$,
contradicting the first part of the lemma.
\end{proof}

By the same reason as in the last part of the above proof, we get that a point on the boundary of $K$,
which is the endpoint of a minimizing geodesic from some point $x\in M\setminus K$ to $K$,
does not belong to $\mathcal R$.

%%%%%%%%%%%%%%%%%%%%%%%%%%%%%%%%%%%%%%%%%%%%%%
\subsection{Harnack inequalities}
\label{susehar}
%%%%%%%%%%%%%%%%%%%%%%%%%%%%%%%%%%%%%%%%%%%%%%
We say that a positive function $\vf$ on $M$ satisfies a \emph{Harnack estimate} if there is a constant $C_{\vf}\ge1$ such that
\begin{align}\label{harnack}
  \sup_{B(x,r)}\vf^2 \le C_\vf^{r+1} \inf_{B(x,r)}\vf^2
\end{align}
for all $x\in M$ and $r>0$.

Suppose now that $M$ is complete with $\Ric_M\ge(1-m)b^2$,
that $|V|$ and $\|\nabla V\|$ are bounded, and that $\vf$ is a positive $\lambda$-eigenfunction of $S=\Delta+V$ on $M$. 
By the estimate of Cheng and Yau \cite[Theorem 6]{CY}, we then have
\begin{align}\label{cy}
  \frac{\|\nabla\vf(x)\|}{\vf(x)}
  &\le C_{3,m}\max\{\|V-\lambda\|_\infty/b, \|\nabla V\|_\infty^{1/3},b\}
\end{align}
for all $x\in M$ (with $m_1=m_4=c=0$, $m_2=m_5=\|V-\lambda\|_\infty$, $m_3=\|\nabla V\|_\infty$, and $a=\infty$ in loc.\,cit.).
In particular, $\vf$ satisfies a Harnack estimate \eqref{harnack}.
Notice that $\Delta$ and $\lambda$ rescale by $1/s$ if the Riemannian metric of $M$ is scaled by $s>0$.
To keep $\vf$ as an eigenfunction, $V$ must therefore also be rescaled by $1/s$.

%%%%%%%%%%%%%%%%%%%%%%%%%%%%%%%%%%%%%%%%%%%%%%
\section{Modified Buser inequality}
\label{secbus}
%%%%%%%%%%%%%%%%%%%%%%%%%%%%%%%%%%%%%%%%%%%%%%

Following Buser's arguments in \cite[Section 4]{Bu}, we prove the following estimate.

\begin{lem}\label{modbus}
Let $M$ be a complete and connected Riemannian manifold with Ricci curvature bounded from below
and $\vf>0$ be a smooth function on $M$ which satisfies a Harnack inequality.
Suppose that $h_\vf(M)=0$, and let $\ve,r>0$ be given.
Then there exists a bounded open subset $A\subseteq M$ such that
\begin{align*}
  |A^r\setminus A|_\vf < \ve |A|_\vf.
\end{align*}
\end{lem}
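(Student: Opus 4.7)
\emph{Setup.} The plan is to follow Buser's arguments in \cite[Section 4]{Bu}, with the $\vf$-weighted volume in place of the standard one. After rescaling the metric (which only alters $C_\vf$ in a controlled way), we may assume $\Ric_M\ge1-m$. Given $\ve,r>0$, introduce auxiliary parameters $\rho\in(0,r]$ and $\delta>0$, both to be chosen small at the end. Since $h_\vf(M)=0$, pick a bounded domain $D\subseteq M$ with smooth boundary such that $|\partial D|_\vf\le\delta|D|_\vf$. Fix a complete $2\rho$-separated subset $X\subseteq M$ and its Dirichlet cells $\{D_x\}_{x\in X}$. By \eqref{dirdom2}, $B(x,\rho)\subseteq D_x\subseteq B(x,2\rho)$; by \eqref{hndx}, $h^N(D_x)\ge C_{m,\rho}$; and by \eqref{harnack}, $\sup_{D_x}\vf^2\le K\inf_{D_x}\vf^2$ with $K:=C_\vf^{2\rho+1}$.

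\emph{Approximation of $D$ by cells.} Split $X=X_+\sqcup X_-$ according to whether $|D_x\cap D|\ge|D_x|/2$, and set $A:=\inte\bigl(\bigcup_{x\in X_+}D_x\bigr)$. For $x\in X_+$, the Neumann--Cheeger bound gives $|\partial D\cap D_x|\ge C_{m,\rho}|D_x\setminus D|$; for $x\in X_-$, the symmetric estimate $|\partial D\cap D_x|\ge C_{m,\rho}|D\cap D_x|$ holds. Summing over $x$ and passing to $\vf$-volumes via the Harnack comparison on each cell,
\[
|A\triangle D|_\vf\le\frac{2K}{C_{m,\rho}}|\partial D|_\vf\le\frac{2K\delta}{C_{m,\rho}}|D|_\vf,
\]
so $|A|_\vf\ge|D|_\vf/2$ provided $\delta$ is small enough.

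\emph{The $r$-neighborhood estimate.} Since $D_x\subseteq B(x,2\rho)$, any $z\in A^r\setminus A$ lies in some cell $D_x$ with $x\in X_-$ and $d(x,X_+)\le r+4\rho$. Joining $x$ to a nearest $y\in X_+$ by a minimizing geodesic and tracking which cell the geodesic passes through, one finds an adjacent pair $(x^*,y^*)\in X_-\times X_+$ with $d(x^*,y^*)\le 4\rho$ and $d(x,x^*)\le r+4\rho$. For any such pair, $\partial D$ must meet $D_{x^*}\cup D_{y^*}$, since otherwise the two cells would lie entirely inside or entirely outside $D$, contradicting the definition of $X_\pm$. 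Using Bishop--Gromov (\cref{grobisi}) to bound the multiplicity of points of $X$ lying in any ball of radius $r+O(\rho)$, and using the Harnack estimate together with the resulting volume comparison to compare nearby cells' $\vf$-volumes, we obtain
\[
|A^r\setminus A|_\vf\le C\,|\partial D|_\vf\le C\delta|D|_\vf\le 2C\delta|A|_\vf
\]
for a constant $C=C(r,\rho,m,C_\vf)$. Choosing $\delta<\ve/(2C)$ concludes the proof.

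\emph{Main obstacle.} The delicate step is the bound on $|A^r\setminus A|_\vf$: although each $X_-$ cell contributing to $A^r\setminus A$ can be charged to an adjacent pair $(x^*,y^*)$ through which $\partial D$ passes, the intersection of $\partial D$ with a single cell may be only a thin slice, so Neumann--Cheeger alone does not bound the cell's $\vf$-volume by that slice. One circumvents this by combining the bounded-geometry multiplicity estimates from Bishop--Gromov with the Harnack inequality, so that the total $\vf$-volume of the contributing cells is controlled by $|\partial D|_\vf$ rather than merely by $|A|_\vf$; this is exactly where the lower Ricci bound enters essentially.
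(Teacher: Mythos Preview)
Your approximation step (controlling $|A\triangle D|_\vf$) is fine, but the $r$-neighborhood estimate has a genuine gap that your ``Main obstacle'' paragraph flags without resolving. At an adjacent pair $(x^*,y^*)\in X_-\times X_+$ it can perfectly well happen that $D_{x^*}\subseteq D^c$ and $D_{y^*}\subseteq D$; then $\partial D$ meets $\overline{D_{x^*}\cup D_{y^*}}$ only in a set of $(m-1)$-measure zero, so there is no portion of $|\partial D|_\vf$ to charge those cells against. Bishop--Gromov controls the \emph{multiplicity} of cells near a given witness, and Harnack compares their $\vf$-volumes, but neither produces a positive lower bound on $|\partial D\cap(D_{x^*}\cup D_{y^*})|_\vf$. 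To make your route work you would have to replace the cell pair by a larger ball (say $B(y^*,6\rho)$) containing both $D_{x^*}$ and $D_{y^*}$, show via Bishop--Gromov that both $|D\cap B|$ and $|D^c\cap B|$ are a definite fraction of $|B|$, and then apply Buser's Neumann--Cheeger bound on that ball; only after this can the multiplicity/Harnack bookkeeping yield $|A^r\setminus A|_\vf\le C|\partial D|_\vf$. None of this is written.

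The paper sidesteps the difficulty by a different decomposition. Instead of the discrete cell split $X_\pm$, it partitions $M$ using the \emph{continuous} density $x\mapsto|A\cap B(x,r)|_\vf/|B(x,r)|_\vf$ into $A_+$, $M_0$, $M_-$, with threshold $\tfrac{1}{2C_\vf^{r+1}}$. Two things are gained: first, by the intermediate value theorem every path from $A_+$ to $M_-$ meets $M_0$, so $A_+^{2r}\setminus A_+\subseteq M_0^{2r}$ automatically; second, the threshold is calibrated so that for $x\in M_0$ one has simultaneously $|A\cap B(x,r)|\le|B(x,r)|/2$ (unweighted, so Buser's \cref{bulem} applies to $B(x,r)$ and gives a non-trivial bound) and $|A\cap B(x,r)|_\vf=\tfrac{1}{2C_\vf^{r+1}}|B(x,r)|_\vf$ (so the $\vf$-volume of the ball is comparable to that of $A\cap B(x,r)$). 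This guarantees a \emph{definite} amount of $\partial A$ in every ball centered on $M_0$, eliminating the thin-slice problem at its source. Your Dirichlet-cell split lacks this calibration, which is why the charging argument stalls.
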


\begin{proof}
Renormalizing the metric of $M$ if necessary, we assume throughout the proof that $\Ric_M\ge1-m$ and let $\beta=\beta_{m}$ (see \cref{susevol}), where $m=\dim M$.

Let $\ve,r>0$ be given.
Recall the constants $C_{m,r}$ and $C_\vf$ from \cref{bulem} and \eqref{harnack}.
Let $A\subseteq M$ be a (non-empty) bounded domain with smooth boundary such that
\begin{align}\label{a}
  \frac{2\beta(4r)C_\vf^{6r+3}}{\beta(r)C_{m,r}} h_\vf(M,A) < \ve,
\end{align}
where $h_\vf(M,A)$ is the isoperimetric ratio of $A$ as in \eqref{isop}.
We partition $M$ into the sets
\begin{align}
  A_+ &= \{ x\in M \mid |A\cap B(x,r)|_\vf > \frac1{2C_\vf^{r+1}}|B(x,r)|_\vf \}, \label{ax1} \\
  M_0 &= \{ x\in M \mid |A\cap B(x,r)|_\vf = \frac1{2C_\vf^{r+1}}|B(x,r)|_\vf \}, \label{ax2} \\
  M_- &= \{ x\in M \mid |A\cap B(x,r)|_\vf < \frac1{2C_\vf^{r+1}}|B(x,r)|_\vf \}. \label{ax3} 
\end{align}
Clearly, $|A\cap D_x|\ne0$ for all $x\in A_+\cup M_0$.
Since $|B(x,r)|_\vf$ and $|A\cap B(x,r)|_\vf$ depend continuously on $x$,
a path from $M_-$ to $A_+$ will pass through $M_0$.
Since $A$ is bounded, $A_+$ and $M_0$ are bounded.
Moreover, $\partial A_+\subseteq M_0$, $A_+$ and $M_-$ are open, and $M_0$ is closed, hence compact.
We will show that $A_+$ satisfies an inequality as required in \cref{modbus}.
By passing from $A$ to $A_+$, we get rid of a possibly \lq\lq hairy structure\rq\rq{} along the \lq\lq outer part\rq\rq{} of $A$.
We pay by possibly loosing regularity of the boundary.

We now choose a $2r$-separated subset $X$ of $M$ as follows.
We start with a $2r$-separated subset $X_0\subseteq M_0$ such that $M_0$ is contained in the union of the balls $B(x,2r)$ with $x\in X_0$.
(If $M_0=\emptyset$, then $X_0=\emptyset$.)
We extend $X_0$ to a $2r$-separated subset $X_0\cup X_+$ of $M_0\cup A_+$ such that $M_0\cup A_+$ is contained in the union of the balls $B(x,2r)$ with $x\in X_0\cup X_+$.
(If $A_+=\emptyset$, then $X_+=\emptyset$.)
We finally extend $X_0\cup X_+$ to a complete $2r$-separated subset $X=X_0\cup X_+\cup X_-$ of $M$.
(If $M_-=\emptyset$, then $X_-=\emptyset$.)
By definition, $X_+\subseteq A_+$ and $X_-\subseteq M_-$.
Since $A$ is bounded and $|A\cap B(x,r)|\ne0$ for all $x\in X_0\cup X_+$, the sets $X_0$ and $X_+$ are finite.
By the same reason, the set $Y$ of $x\in X_-$ with $|A\cap B(x,r)|_\vf\ne0$ is finite.

The neighborhood $M_0^{2r}$ is covered by the balls $B(x,4r)$ with $x\in X_0$.
Using \cref{grobisi}, \eqref{harnack}, and \eqref{ax2}, we therefore get
\begin{align*}
  |M_0^{2r}|_\vf
  &\le \sum_{x\in X_0} |B(x,4r)|_\vf \\
  &\le \frac{\beta(4r)C_\vf^{4r+1}}{\beta(r)} \sum_{x\in X_0} |B(x,r)|_\vf \\
  &=  \frac{2\beta(4r)C_\vf^{5r+2}}{\beta(r)} \sum_{x\in X_0} |A\cap B(x,r)|_\vf.
\end{align*}
For $x\in X_0\subseteq M_0$, we have $|A\cap B(x,r)|\le|B(x,r)|/2$
and hence
\begin{align*}
  \frac{|\partial A\cap B(x,r)|}{|A\cap B(x,r)|} \ge h^N(B(x,r))
\end{align*}
with $h^N(B(x,r))$ according to \eqref{neuche}.
Applying \cref{bulem} to $D=B(x,r)$, we therefore obtain
\begin{align*}
  \frac{|\partial A\cap B(x,r)|_\vf}{|A\cap B(x,r)|_\vf}\ge \frac1{C_\vf^{r+1}}\frac{|\partial A\cap B(x,r)|}{|A\cap B(x,r)|}
  \ge \frac{C_{m,r}}{C_\vf^{r+1}}.
\end{align*}
Hence
\begin{equation}\label{ax5}
\begin{split}
  |M_0^{2r}|_\vf 
  &\le \frac{2\beta(4r)C_\vf^{6r+3}}{\beta(r)C_{m,r}} \sum_{x\in X_0} |\partial A\cap B(x,r)|_\vf \\
  &\le \frac{2\beta(4r)C_\vf^{6r+3}}{\beta(r)C_{m,r}} |\partial A|_\vf \\
  &= \frac{2\beta(4r)C_\vf^{6r+3}}{\beta(r)C_{m,r}} h_\vf(M,A) |A|_\vf
  \le \ve |A|_\vf,
\end{split}
\end{equation}
where we use that $h_\vf(M,A)$ satisfies \eqref{a}.

Since any curve from $A_+$ to $M_-$ passes through $M_0$, $A_+$ has distance at least $2r$ to $M_-\setminus M_0^{2r}$.
Hence $M_-\setminus M_0^{2r}$ is covered by the Dirichlet domains $D_x$ with $x\in X_-$. 

With $Y$ as above, we let $Z=X_0\cup Y$.
Using \eqref{ax2} and \eqref{ax3}, we have
\begin{align*}
  \frac{|A\cap B(x,r)|}{|B(x,r)|}
  \le C_\vf^{r+1} \frac{|A\cap B(x,r)|_\vf}{|B(x,r)|_\vf} \le \frac12
\end{align*}
for any $x\in Z$.
Letting $A^c=M\setminus A$, we obtain
\begin{equation*}
\begin{split}
  |A^c\cap D_x|
  &\ge |A^c\cap B(x,r)| \ge \frac12 |B(x,r)| \\
  &\ge \frac{\beta(r)}{2\beta(2r)}|D_x| \ge  \frac{\beta(r)}{2\beta(2r)}|A\cap D_x| > 0.
\end{split}
\end{equation*}
for any $x\in Z$, where we use in the third inequality that $D_x\subseteq B(x,2r)$.
With the constant $C_{m,r}$ as in \cref{bulem}, we therefore get
\begin{equation}\label{az4}
\begin{split}
  C_{m,r}
  &\le h^N(D_x) \\
  &\le \frac{|\partial A\cap\inte D_x|}{\min\{|A\cap D_x|,|A^c\cap D_x|\}} \\
  &\le \frac{2\beta(2r)}{\beta(r)} \frac{|\partial A\cap\inte D_x|}{|A\cap D_x|} \\
  &\le \frac{2\beta(2r)C_\vf^{2r+1}}{\beta(r)} \frac{|\partial A\cap\inte D_x|_\vf}{|A\cap D_x|_\vf}
\end{split}
\end{equation}
for any $x\in Z$, where we use again, now in the last inequality, that $D_x\subseteq B(x,2r)$.
Using \eqref{az4} and \eqref{a}, we conclude that
\begin{equation}\label{az5}
\begin{split}
  |A\cap(M_-\setminus M_0^{2r})|_\vf
  &\le \sum_{x\in Z}|A\cap D_x|_\vf \\
  &\le \frac{2\beta(2r)C_\vf^{2r+1}}{\beta(r)C_{m,r}} \sum_{x\in Z}|\partial A\cap\inte D_x|_\vf \\
  &\le \frac{2\beta(2r)C_\vf^{2r+1}}{\beta(r)C_{m,r}} |\partial A|_\vf \\
  &= \frac{2\beta(2r)C_\vf^{2r+1}}{\beta(r)C_{m,r}} h_\vf(M,A) |A|_\vf
  \le \ve |A|_\vf,
\end{split}
\end{equation}
where we use \eqref{a} in the last step, recalling that $C_\vf\ge1$.

Since $A\subseteq A_+\cup M_0^{2r}\cup(A\cap(M_-\setminus M_0^{2r}))$, we obtain
\begin{align*}
  |A_+|_\vf
  &\ge |A|_\vf - |M_0^{2r}|_\vf - |A\cap(M_-\setminus M_0^{2r})|_\vf \\
  &\ge (1-2\ve)|A|_\vf.
\end{align*}
In particular, $A_+$ is not empty.
Since $A_+^{2r}\setminus A_+\subseteq M_0^{2r}$, we conclude that
\begin{align*}
  |A_+^{2r}\setminus A_+|_\vf \le |M_0^{2r}|_\vf \le \ve|A|_\vf
  \le \frac{\ve}{1-2\ve}|A_+|_\vf.
\end{align*}
In conclusion, $A_+$ is a bounded open subset of $M$ that satisfies an inequality as asserted in \cref{modbus}, albeit with $2r$ and $2\ve$ in place of $r$ and $\ve$ (assuming w.l.o.g.\;that $\ve<1/4$). 
\end{proof}

Whereas $\ve>0$ should be viewed as small, the number $r$ is large in our application of \cref{modbus} (see \eqref{rlarge}).
The difference to Buser's discussion lies in the fact that in \cref{modbus}, for $\ve$ and $r$ are given, the domain $A$ is chosen according to \eqref{a}.

\begin{rem}\label{modbus1}
Let $M$ be a non-compact, complete, and connected Riemannian manifold of dimension $m$ with $\Ric_M\ge(1-m)b^2$.
Let $V\colon M\to\R$ be a smooth potential on $M$, and assume that $V$ and $\nabla V$ are bounded.
Let $\vf$ be a positive $\lambda$-eigenfunction of the associated Schr\"odinger operator $S$ on $M$.
Following the above line of proof and Buser's arguments at the end of his short proof of Theorem 1.2 in \cite{Bu}, 
one obtains inequalities of the form
\begin{equation}\label{modbus1a}
\begin{split}
  \lambda_0(M,V) - \lambda
  &\le  C'_{m,\|V-\lambda\|_\infty,\|\nabla V\|_\infty}\max\{bh_\vf(M),h_\vf(M)^2\}, \\
  \lambda_{\ess}(M,V) - \lambda
  &\le  C'_{m,\|V-\lambda\|_\infty,\|\nabla V\|_\infty}\max\{bh_{\vf,\ess}(M),h_{\vf,\ess}(M)^2\}.
\end{split}
\end{equation}
To get rid of the squares $h_\vf(M)^2$ and $h_{\vf,\ess}(M)^2$, respectively,
we change Buser's argument at the end of his proof of \cite[Theorem 7.1]{Bu} and estimate
\begin{equation*}
\begin{split}
  h_\vf(M), h_{\vf,\ess}(M) &\le \sup_{x\in M}h_\vf(B(x,1)) \\
  &\le C_\vf^2 \sup_{x\in M}h(B(x,1)) \\
  &\le 2 C_\vf^2 \sup_{x\in M}\lambda_0(B(x,1))^{1/2} \\
  &\le 2 C_\vf^2 \lambda_0(B)^{1/2}
  \le bC''_{m,\|V-\lambda\|_\infty,\|\nabla V\|_\infty}, 
\end{split}
\end{equation*}
where we use the definition of $h_\vf$ and $h_{\vf,\ess}$ as in \eqref{modche},
the Harnack constant of $\vf$ as in \eqref{harnack},
the Cheeger inequality \eqref{chein},
and Cheng's \cite[Theorem 1.1]{Ch},
where $B$ denotes a ball of radius $1$ in the $m$-dimensional hyperbolic space of sectional curvature $-b^2$.
We finally arrive at the inequalities
\begin{equation}\label{genbus}
\begin{split}
  \lambda_0(M,V) - \lambda
  &\le C_{m,\|V-\lambda\|_\infty,\|\nabla V\|_\infty} b h_\vf(M), \\
  \lambda_{\ess}(M,V) - \lambda
  &\le C_{m,\|V-\lambda\|_\infty,\|\nabla V\|_\infty} b h_{\vf,\ess}(M),
\end{split}
\end{equation}
which extend Buser's \cite[Theorem 7.1]{Bu}.
The dependence of $C_{m,\|V-\lambda\|_\infty,\|\nabla V\|_\infty}$ on $C_{3,m}$ (as in \eqref{cy}), $\|V-\lambda\|_\infty$, and $\|\nabla V\|_\infty$ is exponential in our approach and, in particular, exponential in $\lambda$.
Therefore the use of the estimates seems to be restricted.
However, together with \eqref{modche2}, they have at least the consequence that
$\lambda_0(M,V)=\lambda$ if and only if $h_\vf(M)=0$
and that $\lambda_{\ess}(M,V)=\lambda$ if and only if $h_{\vf,\ess}(M)=0$.
\end{rem}

%%%%%%%%%%%%%%%%%%%%%%%%%%%%%%%%%%%%%%%%%%%%%%
\section{Back to Riemannian coverings}
\label{secfin}
%%%%%%%%%%%%%%%%%%%%%%%%%%%%%%%%%%%%%%%%%%%%%%

We return to the situation of a Riemannian covering as in the introduction.
Suppose that the Ricci curvature of $M_0$ is bounded from below.
Let $V_0$ be a smooth potential on $M_0$ with $\|V_0\|_\infty,\|\nabla V_0\|_\infty<\infty$
and set $V_1=V_0\circ\pi$.
Let $\lambda=\lambda_0(M_0,V_0)$ and $\vf_0$ be a positive $\lambda$-eigenfunction of $S_0=\Delta+V_0$ on $M_0$.
Then $\vf=\vf_0\circ\pi$ is a positive $\lambda$-eigenfunction of $S_1=\Delta+V_1$ on $M_1$.

\begin{thm}\label{notams}
If $\lambda_{\ess}(M_0,V_0)>\lambda_0(M_0,V_0)$,
then $\lambda_0(M_1,V_1)=\lambda_0(M_0,V_0)$ if and only if the covering $\pi\colon M_1\to M_0$ is amenable.
\end{thm}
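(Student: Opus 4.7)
The \emph{if} direction is immediate from \cite[Theorem 1.2]{BMP}, so the plan focuses on the \emph{only if} direction. Assume $\lambda_0(M_1,V_1)=\lambda$. Two preparatory facts drive the argument. First, because $\lambda=\lambda_0(M_0,V_0)<\lambda_{\ess}(M_0,V_0)$, the ground state of $S_0$ lies in the discrete spectrum; by a standard Perron--Frobenius type argument, $\vf_0$ may be taken in $L^2(M_0)$ and strictly positive, so $|M_0|_\vf<\infty$. Second, by the modified Cheeger inequality \eqref{modche2} applied to $M_1$, the equality $\lambda_0(M_1,V_1)=\lambda$ forces $h_\vf(M_1)=0$. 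Finally, the Cheng--Yau estimate \eqref{cy}, together with the boundedness hypotheses on $V_0,\nabla V_0$ and the lower Ricci bound (which lift to $M_1$ via $\pi$), ensures that $\vf=\vf_0\circ\pi$ satisfies a Harnack inequality \eqref{harnack} on $M_1$ with a constant $C_\vf$ independent of the covering.

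The next step is to apply \cref{modbus} to $(M_1,\vf)$: for any prescribed $r>0$ and $\ve'>0$ there exists a bounded open $A\subseteq M_1$ with $|A^r\setminus A|_\vf<\ve'|A|_\vf$. This is the analytic ``F\o{}lner condition at scale $r$'' that I want to transfer into a combinatorial F\o{}lner condition for the right action of $\Gamma_0$ on $\Gamma_1\backslash\Gamma_0$.

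To build the bridge, I would fix a relatively compact fundamental domain $D\subseteq M$ for $\Gamma_0$. The translates $\{\pi_1(\gamma D)\mid[\gamma]\in\Gamma_1\backslash\Gamma_0\}$ tile $M_1$ up to a set of measure zero, and each tile has $\vf$-volume equal to $|M_0|_\vf$. Given a finite subset $F_0\subseteq\Gamma_0$ and $\ve>0$, choose $r$ large enough that $\gamma_0 D\subseteq D^{r/2}$ for every $\gamma_0\in F_0$, and then invoke \cref{modbus} with this $r$ and a sufficiently small $\ve'$ (depending on $\ve$, $|F_0|$, the diameter of $D$, and $C_\vf$) to obtain $A$. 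Define
\begin{align*}
  E = \bigl\{[\gamma]\in\Gamma_1\backslash\Gamma_0 \,\bigm|\, |A\cap\pi_1(\gamma D)|_\vf > \tfrac12|\pi_1(\gamma D)|_\vf\bigr\}.
\end{align*}
Using the Harnack control on each tile and $|M_0|_\vf<\infty$, one checks that $E$ is finite, that $|E|\cdot|M_0|_\vf$ is comparable to $|A|_\vf$, and that every tile contributing to $EF_0\setminus E$ sits inside the annulus $A^r\setminus A$. Pushing the quantitative estimates through yields $|EF_0\triangle E|<\ve|E|$, and the F\o{}lner criterion (\cref{fc}) then gives amenability of the $\Gamma_0$-action on $\Gamma_1\backslash\Gamma_0$.

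The main obstacle is the discrete-to-continuous bookkeeping in the last paragraph: one must propagate the ratio $|A^r\setminus A|_\vf/|A|_\vf<\ve'$ through the Harnack distortion on each tile (costing a factor roughly $C_\vf^{\operatorname{diam}(D)+1}$) and through the counting of boundary tiles (costing a factor depending on $|F_0|$) without losing control, so that the final combinatorial F\o{}lner ratio is still $<\ve$. The crucial point making this possible is precisely $|M_0|_\vf<\infty$, which gives every tile the same finite $\vf$-mass and thereby turns $|\cdot|_\vf$ into a faithful proxy for the counting measure on $\Gamma_1\backslash\Gamma_0$; this is exactly where the hypothesis $\lambda_{\ess}(M_0,V_0)>\lambda_0(M_0,V_0)$ enters.
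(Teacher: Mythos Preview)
Your approach has a genuine gap at the step where you ``fix a relatively compact fundamental domain $D\subseteq M$ for $\Gamma_0$''. Such a domain exists only when $M_0$ is compact, but the theorem is specifically designed to handle non-compact $M_0$ (see \cref{exafir}.2 and \cref{gefin}). When $M_0$ is non-compact, any fundamental domain $D$ is unbounded, and then you cannot choose a finite $r$ with $\gamma_0 D\subseteq D^{r/2}$ for all $\gamma_0\in F_0$, nor can you invoke the Harnack inequality uniformly across a tile (your own constant $C_\vf^{\operatorname{diam}(D)+1}$ blows up). The finiteness of $|M_0|_\vf$ does not rescue this: it controls the $\vf$-mass of each tile, but not its diameter, and it is the diameter that you need in order to push the tiles in $EF_0\setminus E$ into the annulus $A^r\setminus A$. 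So your final sentence misidentifies where the hypothesis $\lambda_{\ess}(M_0,V_0)>\lambda_0(M_0,V_0)$ really enters.

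The paper avoids this by not tiling at all. It uses the spectral gap not merely to get $\vf_0\in L^2(M_0)$, but to produce a \emph{compact} $K_0\subseteq M_0$ such that $\lambda_0(M_1\setminus\pi^{-1}(K_0),V_1)>\lambda$; this forces the test functions built from the sets $A_n$ of \cref{modbus} to carry a definite fraction of their $\vf$-mass over $K_0$. A fibrewise averaging argument then extracts a single point $x\in K_0$ for which the counting ratio $\#(\pi^{-1}(x)\cap(A^r\setminus A))\big/\#(\pi^{-1}(x)\cap A)$ is small (\cref{count}). Because $x$ lies in the fixed compact set $K_0$, a lift $u\in M$ satisfies a uniform bound $d(u,g^{-1}u)<r$ for all $g$ in the given finite set $G\subseteq\Gamma_0$---precisely the displacement bound your tile argument cannot supply when $D$ is unbounded. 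The F\o{}lner set is then simply $\{\Gamma_1 h\mid\pi_1(hu)\in A\}$, and the counting inequality translates directly into the F\o{}lner condition. In short, the compact object replacing your fundamental domain is $K_0$, and its existence is the true content of the essential-spectrum hypothesis.
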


Consider the following three implications:
\begin{enumerate}
\item
If $\pi\colon M_1\to M_0$ is amenable, then $\lambda_0(M_1,V_1)=\lambda_0(M_0,V_0)$.
\item
If $\lambda_0(M_1,V_1)=\lambda_0(M_0,V_0)$, then $h_\vf(M_1)=0$.
\item
If $h_\vf(M_1)=0$, then $\pi\colon M_1\to M_0$ is amenable.
\end{enumerate}
The first one is \cite[Theorem 1.2]{BMP}
and the second is an immediate consequence of \eqref{modche2}.
These two assertions hold without any assumptions on the curvature of $M$ and the potential $V$.
The third one does not hold without any further assumptions.
We require that the Ricci curvature of $M_0$ is bounded from below,
that the potential $V_0$ and its derivative $dV_0$ are bounded,
and that $\lambda_{\ess}(M_0,V_0)>\lambda_0(M_0,V_0)$.
To prove \cref{notams}, and therewith also \cref{notam},
it remains to establish the third implication under these additional assumptions.
We need to prove that the right action of $\Gamma_0$ on $\Gamma_1\backslash\Gamma_0$ is amenable. 
To that end, we will show that the F\o{}lner criterion for amenability is satisfied.

\begin{fc}\label{folner}
The right action of a countable group $\Gamma$ on a countable set $X$ is amenable if and only if, for any finite subset $G\subseteq\Gamma$ and $\ve>0$, there is a finite subset $F\subseteq X$ such that
\begin{align*}
  \#(F\setminus Fg)<\ve\#(F) \hspace{2mm}\text{for all $g\in G$.}
\end{align*}
\end{fc} 

\begin{proof}[Proof of \cref{notams}]
Since $\lambda_{\ess}(M_0,V_0)>\lambda_0(M_0,V_0)$, there is a compact domain $K\subseteq M_0$ such that
\begin{align}\label{ess0}
  \lambda_0(M_0\setminus K,V_0) > \lambda_0(M_0,V_0).
\end{align}
Since $\pi\colon M_1\setminus\pi^{-1}(K)\to M_0\setminus K$ is a Riemannian covering, we have
\begin{align}\label{ess1}
  \lambda_0(M_1\setminus\pi^{-1}(K),V_1) \ge \lambda_0(M_0\setminus K,V_0).
\end{align}
Note that the manifolds $M_0\setminus K$ and $M_1\setminus\pi^{-1}(K)$ might be not connected, but the assertion still holds since the inequality applies to each component of $M_0\setminus K$ and connected component of $M_1\setminus\pi^{-1}(K)$ over it.

Let $\chi_0$ be a smooth cut-off function on $M_0$ which is equal to $0$ on a neighborhood of $K$ in $M_0$ and equal to $1$ outside a compact domain $K_0\subseteq M_0$ and set $\chi=\chi_0\circ\pi$.

\begin{lem}\label{count}
For all $r,\ve>0$,
there is a bounded open subset $A\subseteq M_1$ and a point $x\in K_0$ such that $\pi^{-1}(x)\cap A\ne\emptyset$ and
\begin{align*}
  \frac{\#\big(\pi^{-1}(x)\cap(A^r\setminus A)\big)}{\#\big(\pi^{-1}(x)\cap A\big)}
  < \ve.
\end{align*}
\end{lem}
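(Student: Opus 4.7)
Given $r,\ve>0$, the strategy is to apply \cref{modbus} to produce a bounded open $A\subseteq M_1$ with $|A^r\setminus A|_\vf$ small compared with $|A|_\vf$, and then convert the resulting $\vf$-volume bound into the required fiber-count bound by integration over $K_0$ using the coarea identity
\begin{align*}
|B|_\vf = \int_{M_0}\vf_0^2(y)\,\#(\pi^{-1}(y)\cap B)\,dy,
\end{align*}
which is valid for every bounded Borel set $B\subseteq M_1$ because $\pi$ is a local isometry and $\vf=\vf_0\circ\pi$. The key reduction is to establish a lower bound of the form
\begin{align*}
|A\cap\pi^{-1}(K_0)|_\vf \ge c_0\,|A|_\vf
\end{align*}
with $c_0>0$ depending only on $\chi_0$ and $\mu:=\lambda_0(M_1\setminus\pi^{-1}(K),V_1)-\lambda$, which is positive by \eqref{ess0} and \eqref{ess1}.

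To establish this lower bound I use the renormalized variational characterization \eqref{bottom2} applied to $(M_1\setminus\pi^{-1}(K),V_1)$, which gives
\begin{align*}
\mu \int_{M_1} f^2\vf^2 \le \int_{M_1} \|\grad f\|^2\vf^2
\end{align*}
for every Lipschitz $f$ with compact support in $M_1\setminus\pi^{-1}(K)$, and I test it with $f=\chi\psi_A$, where $\psi_A(z)=\max\{0,1-d(z,A)/r\}$ is the standard $1/r$-Lipschitz cutoff equal to $1$ on $A$ and vanishing outside $A^r$. Since $\chi$ vanishes on a neighborhood of $\pi^{-1}(K)$, this $f$ is admissible. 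The left-hand side is bounded below by $|A\setminus\pi^{-1}(K_0)|_\vf=|A|_\vf-|A\cap\pi^{-1}(K_0)|_\vf$, since $\chi\equiv1$ outside $\pi^{-1}(K_0)$ and $\psi_A\equiv1$ on $A$. Using $\|\grad f\|^2\le 2\chi^2\|\grad\psi_A\|^2+2\psi_A^2\|\grad\chi\|^2$, the right-hand side is bounded above by $(2/r^2)|A^r\setminus A|_\vf+2\|\grad\chi_0\|_\infty^2\,|A^r\cap\pi^{-1}(K_0)|_\vf$, so combining with the Buser hypothesis $|A^r\setminus A|_\vf<\ve'|A|_\vf$ yields an estimate of the shape
\begin{align*}
\bigl(\mu-2\ve'(1/r^2+\|\grad\chi_0\|_\infty^2)\bigr)|A|_\vf \le \bigl(\mu+2\|\grad\chi_0\|_\infty^2\bigr)|A\cap\pi^{-1}(K_0)|_\vf,
\end{align*}
from which the desired $c_0>0$ emerges by choosing $\ve'$ sufficiently small in terms of $\mu$, $r$, and $\|\grad\chi_0\|_\infty$ (observe that $c_0$ itself is then independent of $A$ and of $\ve$).

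With this lower bound secured, I impose in addition $\ve'\le c_0\ve$ on the choice of $\ve'$ before invoking \cref{modbus}. Combining the two inequalities gives
\begin{align*}
\int_{K_0}\vf_0^2(y)\,\#(\pi^{-1}(y)\cap(A^r\setminus A))\,dy \le |A^r\setminus A|_\vf < \ve\,|A\cap\pi^{-1}(K_0)|_\vf = \ve\int_{K_0}\vf_0^2(y)\,\#(\pi^{-1}(y)\cap A)\,dy,
\end{align*}
and a standard Chebyshev-type argument (the set $E=\{y\in K_0:F(y)\ge\ve G(y)\}$, with $F$, $G$ the two counting functions, cannot absorb all of the $\vf_0^2 G$-mass on $K_0$, so its complement has positive measure) produces a point $x\in K_0\setminus E$; on $K_0\setminus E$ the strict inequality $F(x)<\ve G(x)$ with $F,G\ge0$ forces $G(x)>0$, i.e.\ $\pi^{-1}(x)\cap A\ne\emptyset$, so $x$ is as required. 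The main obstacle in the argument is the lower bound on $|A\cap\pi^{-1}(K_0)|_\vf$ established in the second paragraph, as this is the only step where the essential-spectrum assumption is put to work and where the spectral gap $\mu$ must be balanced against the error terms of the Rayleigh-quotient estimate.
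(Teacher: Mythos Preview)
Your argument is correct and uses the same ingredients as the paper's proof (\cref{modbus}, the distance cutoff $\psi_A$, the bump $\chi$, the renormalized variational inequality on $M_1\setminus\pi^{-1}(K)$, and fiber integration), but the organization differs. The paper runs a contradiction with a \emph{sequence} $A_n$: it assumes the pointwise fiber inequality $\sum_{y\in\pi^{-1}(x)}\|\grad g_n(y)\|^2\ge\ve\sum_{y\in\pi^{-1}(x)}g_n(y)^2$ for all $n$ and all $x\in K_0$, integrates over $K_0$ to force $\int_{\pi^{-1}(K_0)}g_n^2\vf^2\to0$, and then shows $R_\vf(\chi g_n)\to0$, contradicting the spectral gap; the counting statement is finally read off from the failure of that pointwise inequality via $\|\grad g_n\|=1/(r\|f_n\|)$ a.e.\ on $A_n^r\setminus A_n$. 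You instead obtain the quantitative lower bound $|A\cap\pi^{-1}(K_0)|_\vf\ge c_0|A|_\vf$ directly from a single Rayleigh estimate for $\chi\psi_A$, compare $\vf$-volumes, and pass to fibers by coarea and a Chebyshev selection. Your route is a bit more streamlined---no sequence, no contradiction, and no appeal to the pointwise value of $\|\grad\psi_A\|$---and it makes the dependence on the gap $\mu$ explicit; the paper's route stays closer to the gradient formulation throughout, which is what ties the argument back to \eqref{bottom2} at every step.
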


\begin{proof}
Since $M_1$ is complete with Ricci curvature bounded from below and $h_\vf(M_1)=0$,
\cref{modbus} implies that there exist bounded open subsets $A_n\subseteq M_1$ such that
\begin{align}
  \frac{|A_n^r\setminus A_n|_\vf}{|A_n|_\vf} < \frac1n.
\end{align}
Let $f_n$ be the Lipschitz function on $M_1$ with compact support defined by
\begin{align}
  f_n(x) = \begin{cases}
  1-d(x,A_n)/r \hspace{1mm}&\text{for $x\in A_n^r$,} \\ 0 &\text{for $x\in M_1\setminus A_n^r$.}
  \end{cases}
\end{align}
For the $\vf$-Rayleigh quotient of $f_n$, we have
\begin{equation}
\begin{split}
  R_\vf(f_n)
  &= \frac{\int_{M_1} \|\grad f_n\|^2\vf^2}{\int_{M_1} f_n^2\vf^2} \\
  &\le \frac{\int_{A_n^r\setminus A_n} \|\grad f_n\|^2\vf^2}{\int_{A_n} f_n^2\vf^2} \\
  &=\frac1{r^2}\frac{|U_r(A_n)\setminus A_n|_\vf}{|A_n|_\vf}
  \le \frac1{nr^2}.
\end{split}
\end{equation}
Normalize $f_n$ to $g_n=f_n/\|f_n\|$, where $\|f_n\|$ denotes the modified $L^2$-norm of $f_n$,
that is, $\|f_n\|^2=\int_{M_1}f_n^2\vf^2$.
Then
\begin{align*}
  R_\vf(g_n) = R_\vf(f_n)\le1/nr^2 \to 0.
\end{align*}
Let $\mathcal R\subseteq M_0$ be the subset of full measure such that all $g_n$ are differentiable at all $y\in \pi^{-1}(\mathcal R)$.
Suppose now that
\begin{align*}
  \sum_{y\in\pi^{-1}(x)} \|\grad g_n(y)\|^2
  \ge \ve \sum_{y\in\pi^{-1}(x)} g_n(y)^2
\end{align*}
for all $n\in\N$ and $x\in K_0\cap\mathcal R$.
Since $\pi$ is a Riemannian covering and $\vf$ is constant along the fibers of $\pi$, we then have
\begin{align*}
  \int_{\pi^{-1}(K_0)} \|\grad g_n\|^2\vf^2 \ge \ve \int_{\pi^{-1}(K_0)} g_n^2\vf^2.
\end{align*}
Since $\|g_n\|=1$ and $R_\vf(g_n)\le1/nr^2 \to 0$, we get that
\begin{align*}
  \int_{\pi^{-1}(K_0)} g_n^2\vf^2 \to 0
  \hspace{2mm}\text{and, as a consequence,}\hspace{2mm}
  \int_{M_1\setminus\pi^{-1}(K_0)} g_n^2\vf^2 \to 1.
\end{align*}
Consider now $h_n=\chi g_n$ with $\chi$ as further up.
Then $h_n$ has compact support in $M_1\setminus\pi^{-1}(K)$.
Furthermore,
\begin{align*}
  \int_{M_1} h_n^2\vf^2 = \int_{\pi^{-1}(K_0)} h_n^2\vf^2 + \int_{M_1\setminus\pi^{-1}(K_0)} g_n^2\vf^2 \to 0+1
\end{align*}
and
\begin{equation*}
\begin{split}
  \int_{M_1}\|\grad h_n\|\vf^2
  &\le 2\int_{\pi^{-1}(K_0)} \big(g_n^2\|\grad\chi\|^2+\chi^2\|\grad g_n\|^2\big)\vf^2 \\
  &\hspace{20mm}+ \int_{M\setminus\pi^{-1}(K_0)} \|\grad g_n\|\vf^2
  \to 0,
\end{split}
\end{equation*}
where we use that $0\le\chi\le1$, that $\grad\chi$ is uniformly bounded, and that $\int_{M_1}\|\grad g_n\|\vf^2\to 0$.
Hence the modified Rayleigh quotients $R_\vf(h_n)\to 0$.
This is in contradiction to \eqref{ess1} since the $h_n$ are Lipschitz functions on $M_1$ with compact support in $M_1\setminus\pi^{-1}(K_0)$.
It follows that there are an $n$ and an $x\in K_0\cap\mathcal R$ such that
\begin{align*}
  \sum_{y\in\pi^{-1}(x)} \|\grad g_n(y)\|^2
  < \ve \sum_{y\in\pi^{-1}(x)} g_n(y)^2.
\end{align*}
Since $g_n=0$ on $M_1\setminus A_n^r$, we must have $\pi^{-1}(x)\cap A_n^r\ne\emptyset$.
Furthermore, since $0\le g_n\le1/\|f_n\|$ and $||\grad g_n||=1/r\|f_n\|$ on $\pi^{-1}(\mathcal R)\cap(A_n^r\setminus A_n)$, we conclude that
\begin{align*}
  \frac1{r^2\|f_n\|^2} \#(\pi^{-1}(x)\cap(A_n^r\setminus A_n))
  \le \frac{\ve}{\|f_n\|^2} \#(\pi^{-1}(x)\cap A_n^r).
\end{align*}
This yields that
\begin{align*}
  \#(\pi^{-1}(x)\cap(A_n^r\setminus A_n)) < \ve r^2 \#(\pi^{-1}(x)\cap A_n^r).
\end{align*}
Since $A_n^r$ is the disjoint union of $A_n$ with $A_n^r\setminus A_n$,
we conclude that
\begin{align*}
  \#(\pi^{-1}(x)\cap(A_n^r\setminus A_n)) < \frac{\ve r^2}{1-\ve r^2}(\pi^{-1}(x)\cap A_n)
\end{align*}
as long as $\ve<1/r^2$.
In particular, $\pi^{-1}(x)\cap A_n\ne\emptyset$ if $\ve<1/r^2$.
\end{proof}

We return to the proof of the amenability of the right action of $\Gamma_0$ on $\Gamma_1\backslash\Gamma_0$.
We will use F\o{}lner's criterion \ref{folner}
and let $G\subseteq \Gamma_0$ be a finite subset and $\ve>0$.
We need to show that there is a non-empty finite subset $F\subseteq\Gamma_1\backslash\Gamma_0$ such that
\begin{align*}
  \#(F\setminus Fg) < \ve\#(F) \hspace{2mm}\text{for all $g\in G$.}
\end{align*}
Write $K_0$ as the union of finitely many compact and connected domains $D_i\subseteq M_0$ which are evenly covered with respect to the universal covering $\pi_0\colon M\to M_0$ of $M_0$.
For each $i$, let $B_i$ be a lift of $D_i$ to a leaf of $\pi_0$ over $D_i$.
Then each $B_i$ is a compact subset of $M$ with $\pi_0(B_i)=D_i$.
Since there are only finitely many $B_i$ and all of them are compact, there is a number $r>0$ such that
\begin{align}\label{rlarge}
  d(u,g^{-1}u) < r \hspace{2mm}\text{for all $g\in G$ and $u\in\cup_iB_i$.}
\end{align}
Let $R\subseteq\Gamma_0$ be a set of representatives of the right cosets of $\Gamma_1$ in $\Gamma_0$,
that is, of the elements of $\Gamma_1\backslash\Gamma_0$.
Corresponding to $\ve$ and $r$, choose $x\in K_0$ and $A$ as in \cref{count}.
Fix preimages $u\in M$ and $y=\pi_1(u)\in M_1$ of $x$ under $\pi_0$ and $\pi$, respectively,
and write $\pi_0^{-1}(x) = \Gamma_0u$ as the union of $\Gamma_1$-orbits $\Gamma_1gu$. 
Then $\pi^{-1}(x) = \{\pi(gu) \mid g\in R\}$.
Set
\begin{align*}
 F=\{\Gamma_1h \mid \text{$h\in R$ and $\pi_1(hu)\in\pi^{-1}(x)\cap A$}\}.
\end{align*}
Then $\#(F)=\#(\pi^{-1}(x)\cap A)\ne0$.

Let now $g\in G$ and $h\in R$ with $\Gamma_1h\in F\setminus Fg$.
Then
\begin{align*}
  \pi_1(hu)\in\pi^{-1}(x)\cap A
  \hspace{2mm}\text{and}\hspace{2mm}
  \pi_1(hg^{-1}u)\in\pi^{-1}(x)\setminus A.
\end{align*} 
Since
\begin{align*}
  d(\pi_1(hu),\pi_1(hg^{-1}u)) \le d(hu,hg^{-1}u) = d(u,g^{-1}u) < r
\end{align*}
for all $g\in G$, we get that $\pi_1(hg^{-1}u)\in A^r$.
Hence $\pi_1(hg^{-1}u)$ belongs to $A^r\setminus A$ and therefore
\begin{align*}
  \#(F\setminus Fg)
  &\le \#\big(\pi^{-1}(x)\cap A^r\setminus A)\big) \\
  &< \ve \#\big(\pi^{-1}(x)\cap A\big)
  = \ve \#(F).
\end{align*}
Since $G$ and $\ve$ were arbitrary, we conclude from \cref{folner} that the right action of $\Gamma_0$ on $\Gamma_1\backslash\Gamma_0$ is amenable.
\end{proof}

\begin{proof}[Proof of \cref{coco}.\ref{cocob}]
Let $M_0$ be the interior of a compact manifold $N_0$ as in the definition of conformally compact (in the introduction),
and denote by $g_0$, $h_0$, and $\rho_0$ the corresponding Riemannian metrics and defining function $\rho_0$ of $\partial N_0$. 
Let $X=\grad\rho_0/\|\grad\rho_0\|^2$,
where the gradient of $\rho_0$ is taken with respect to $h_0$.
Since $\partial N_0$ is compact, the flow of $X$ leads to a diffeomorphism of a neighborhood of $\partial N_0$ in $N_0$ with $\partial N_0\times[0,y_0)$
with respect to which $\rho(x,y)=y$ for $(x,y)\in\partial N_0\times[0,y_0)$.
Then
\begin{align*}
  g_0(x,y) = \frac1{y^2}h_0(x,y)
\end{align*}
on $\partial N_0\times(0,y_0)=\partial N_0\times[0,y_0)\cap M_0$.
This is reminiscent of the upper half-space model of the hyperbolic space $H^m$.

From standard formulas for conformal metrics it is now easy to see that,
for all $x_0\in\partial N_0$ and $\ve>0$, there exists a neighborhood $U$ of $(x_0,0)\in\partial N_0\times[0,y_0)$
such that the sectional curvature of each tangent plane at each $(x,y)$ in $U\cap M_0$ is in
\begin{align*}
  (-(\partial_{\nu}\rho(x,0))^2-\ve,-(\partial_{\nu}\rho(x,0))^2+\ve),
\end{align*}
where $\nu$ denotes the inner normal of $N_0$ along $\partial N_0$ with respect to $h_0$.
Note that, for any $r>0$, the $g_0$-ball $B((x_0,y),r))$ is contained in $U\cap M_0$ for all sufficiently small $y>0$.
From Cheng's \cite[Theorem 1.1]{Ch}, we conclude that $\lambda_0(M_0)\le a^2(m-1)^2/4$.

Since $M_0$ is homotopy equivalent to $N_0$, there is a covering $\pi_1\colon N_1\to N_0$
which restricts to the covering $M_1\to M_0$ and such that $M_1$ is the interior of the manifold $N_1$,
but where the boundary $\partial N_1$ of $N_1$ need not be compact anymore.
Nevertheless, lifting $g_0$, $h_0$, and $\rho_0$ to Riemannian metrics $g_1$ on $M_1$, $h_1$ on $N_1$,
and defining function $\rho_1=\rho_0\circ\pi_1$ of $\partial N_1$,
the above statement about sectional curvature remains valid for
\begin{align*}
  \partial N_1\times[0,y_0) = \pi^{-1}(\partial N_0\times[0,y_0)).
\end{align*}
In particular, we have $\lambda_0(M_1)\le a^2(m-1)^2/4$.

Now we are ready for the final step of the proof.
By assumption and \eqref{geq},
\begin{align*}
  a^2(m-1)^2/4 = \lambda_0(M_0) \le \lambda_0(M_1) \le a^2(m-1)^2/4.
\end{align*}
Hence $\lambda_0(M_0) = \lambda_0(M_1)$ as asserted.
\end{proof}

\begin{proof}[Proof of \cref{equess}.\ref{equa}]
By definition, $\lambda_{\ess}(M_1)>\lambda_0(M_1)=:\lambda$
would imply that $\lambda$ does not belong to the essential spectrum of $M_1$.
Hence $\lambda$ would be an eigenvalue of $M_1$ with a square integrable positive eigenfunction $\vf$.
On the other hand, the lift $\psi$ of a positive $\lambda$-eigenfunction from $M_0$ to $M_1$ is also a positive $\lambda$-eigenfunction, but definitely not square integrable since $\pi$ is an infinite covering.
Now by Sullivan's \cite[Theorems 2.7 and 2.8]{Su},
the space of positive, but not necessarily square integrable, $\lambda$-eigenfunctions on $M_1$ is of dimension one.
Hence $\psi$ would be a multiple of $\vf$, a contradiction.
\end{proof}

\begin{proof}[Proof of \eqref{exafire}]
By \cite[Theorem 3.1]{Eb},
each end of $M_0$ has a neighborhood of the form $U=\Gamma_\infty\backslash B$,
where $B$ is a horoball in the universal covering space $M$ of $M_0$ and $\Gamma_\infty\subseteq\Gamma_0$ is the stabilizer of the center $\xi$ of $B$ in the sphere of $M$ at infinity.
Furthermore, $\Gamma_\xi$ leaves the Busemann functions associated to $\xi$ invariant.
We let $b$ be the one such that $\{b=0\}$ is the horosphere $\partial B$.
Then the level sets $\{b=-y\}$, $y>0$, are horospheres foliating $B$.
They are perpendicular to the unit speed geodesics $\gamma_z$ starting in $z\in\{b=0\}$ and ending in $\xi$.
Moreover, $b(\gamma_z(y))=-y$ and $\grad b(\gamma_z(y))=-\dot\gamma_z(y)$.
Since Busemann functions are $C^2$ (see \cite[Proposition 3.1]{HI}),
we obtain a $C^2$-diffeomorphism
\begin{align*}
  \{b=0\}\times(0,\infty) \to B, \quad (z,y) \mapsto \gamma_z(y). 
\end{align*}
Since $\Gamma_\xi$ leaves $b$ invariant, we arrive at a $C^2$-diffeomorphism $U\cong N\times(0,\infty)$,
where $N=\Gamma_\xi\backslash\{b=0\}$
and where the curves $\gamma_x=\gamma_x(y)=(x,y)$ are unit speed geodesics perpendicular to the cross sections $\{y={\rm const}\}$.
The latter lift to the horospheres $\{b={\rm const}\}$ in $B$ and, therefore, have second fundamental form $\le-a$ with respect to the unit normal field $Y=\partial/\partial y$.
In particular, their mean curvature is $\le(1-m)a$ with respect to $Y$.
For the divergence of $Y$, we have
\begin{align*}
  \dive Y = \sum\la\nabla_{E_i}Y,E_i\ra
  = - \sum \la Y, \nabla_{E_i}E_i\ra,
\end{align*}
where $(E_i)$ is a local orthonormal frame.
We choose it such that $E_1=Y$.
Then $\nabla_{E_1}E_1=0$, and we see that $\dive Y$ is the mean curvature of the corresponding cross section with respect to the unit normal field $Y$, hence is $\le(1-m)a$.
All this is well known, but we recall it for convenience.

For a compact domain $A$ in $U$ with smooth boundary $\partial A$ and outer unit normal field $\nu$,
we obtain from the above that
\begin{align*}
  |\partial A| \ge -\int_{\partial A}\la Y,\nu\ra = -\int_A\dive Y \ge a(m-1)|A|.
\end{align*}
Hence the Cheeger constant of $U$ is at least $a(m-1)$.
The claim about $\lambda_{\ess}(M_0)$ now follows from the Cheeger inequality \eqref{chein}.
\end{proof}

%%%%%%%%%%%%%%%%%%%%%%%%%%%%%%%%%%%%%%%%%%%%%

%%%%%%%%%%%%%%%%%%%%%%%%%%%%%%%%%%%%%%%%%%%%%%
\newpage
\end{document}